\documentclass[reqno]{amsart}
\usepackage{amsthm}
\usepackage{amsfonts}
\usepackage{amssymb}
\usepackage{amsmath}
\usepackage{mathrsfs}
\usepackage{csquotes}

\textwidth=6 in \textheight=9.7 in \hoffset=-0.7 in \voffset=-.7 in

\begin{document}
	\theoremstyle{plain}
	\theoremstyle{definition}
	
	\newtheorem{theorem}{Theorem}[section]
	\newtheorem{problem}[theorem]{Problem}
	\newtheorem{proposition}[theorem]{Proposition}
	\newtheorem{corollary}[theorem]{Corollary}
	\newtheorem{lemma}[theorem]{Lemma}
	\newtheorem{example}[theorem]{Example}
	\newtheorem{remark}[theorem]{Remark}
	\newtheorem{definition}[theorem]{Definition}
	\newtheorem{fact}[theorem]{Fact}
	\newtheorem{claim}[theorem]{Claim}
	\newtheorem{note}{\textbf{Note}}[section]
	
	\large
	\title[On the role of the $Ky~Fan$ metric in rough ideal convergence in probability]{On the Role of the $Ky~Fan$ Metric in Rough Ideal Convergence in Probability}

		\subjclass[2020]{Primary 40A35; 60B10 Secondary 40G15}
	
	\keywords{$Ky~Fan$ metric, rough ideal convergence in probability, strong rough ideal cluster points in probability, weak rough ideal cluster points in probability.}
	
	\date{}
		
	\author[Tamim Aziz and Sanjoy Ghosal]{Tamim Aziz and Sanjoy Ghosal}
	\thanks{Department of Mathematics, University of North Bengal, Raja Rammohunpur, Darjeeling-734013, West Bengal, India. \emph{E-mail}: tamimaziz99@gmail.com (T. Aziz); sanjoykumarghosal@nbu.ac.in; sanjoyghosalju@gmail.com (S. Ghosal). \\ Orcid: https://orcid.org/0009-0004-5727-0121 (T. Aziz);  https://orcid.org/0000-0001-8563-1941 (S. Ghosal)\\	The research of the first author is supported by Human Resource Development Group, CSIR, India, through NET-JRF and the grant number is 09/0285(12636)/2021-EMR-I}

		\setcounter {page}{1}

	\begin{abstract}
		Given a probability space $(S,\Delta, \mathbb{P})$ and a separable metric space $(U,d)$, the $Ky~Fan$ metric $\rho(X,Y)$ on the space $\mathfrak{X}^0$ of equivalence classes of random variables (w.r.t. almost sure equality) formed from the set $\mathfrak{X}(U)$ of $U$-valued random variables is given by $
		\rho(X,Y)=\inf \{\varepsilon>0:\mathbb{P}(d(X,Y)>\varepsilon)\leq\varepsilon\}.$ In this article, we primarily introduce the concept of rough ideal convergence in probability which serves as a unifying generalization of both ideal convergence of sequences in metric spaces and convergence of random variables in probability.
		We demonstrate that the rough ideal limit set is closed and bounded w.r.t. the $Ky~Fan$ metric $\rho$, and that, for a certain class of ideals, it forms an $F_{\sigma\delta}$ subset of $\mathfrak{X}^0$. In this process, we present the key concepts of strong and weak rough ideal cluster points in probability. It turns out that the set of strong rough ideal cluster points in probability is always closed, whereas the weak set is conditionally closed in the metric space ($\mathfrak{X}^0,\rho)$.  Finally, we obtain a characterization of a maximal admissible ideal in terms of the sets of strong rough ideal cluster points and the rough ideal limit set in probability.
	\end{abstract}

	\maketitle

	\section{Introduction}
	
	Throughout this article,  $(S,\Delta, \mathbb{P})$ is a probability space, $(U,d)$ is a separable metric space, and $\mathfrak{X}:=\mathfrak{X}(U)$ is the set of $U$-valued random variables on $S$. For each $X,Y\in \mathfrak{X}$, we consider the $Ky$ $Fan$ metric $\rho$ on $\mathfrak{X}$ is defined by $\rho(X,Y)=\inf \{\varepsilon>0:\mathbb{P}(d(X,Y)>\varepsilon)\leq\varepsilon\}$. Note that $(\mathfrak{X^0},\rho)$ is a metric space, where $\mathfrak{X^0}$ represents the set of equivalence classes of random variables in $\mathfrak{X}$, and two random variables are considered equivalent if they are equal almost surely. The primary motivation for restricting attention to separable metric spaces $(U,d)$, rather than considering arbitrary ones, lies in the necessity of the measurability of the metric $d$. This measurability is crucial for establishing a connection between the metric structure of $U$ and that of $\mathfrak{X}(U)$. In particular, the measurability of $d$ enables us to handle the probability metric $\rho$ in a well-defined manner (see \cite{rachev2013methods} for further reading).\\

Let us begin by recalling the concept of a submeasure on $\mathbb{N}$ that plays an  important role in this paper. A map $\varphi:\mathcal{P}(\mathbb{N})\to [0,\infty]$ is a submeasure on $\mathbb{N}$ if (i) $\varphi(\varnothing)=0,$ (ii)	if $A\subseteq B,$ then $\varphi(A)\leq \varphi(B),$ (iii)	 $\varphi(A\cup B)\leq \varphi(A)+\varphi(B)$ for any $A,B\subseteq\mathbb{N}$, and (iv) $\varphi(\{t\})<\infty,$ for all $t\in\mathbb{N}.$ A  submeasure  $\varphi$ is  lower semicontinuous   (briefly, lscsm) if $\varphi(A)=\displaystyle\lim_{t\to\infty}\varphi(A\cap[1,t])$ for all $A\subseteq\mathbb{N}.$
For each lscsm $\varphi$ on $\mathbb{N},$ the exhaustive ideal $Exh(\varphi),$ generated by $\varphi,$ is defined as follows: 
\begin{equation*}
	Exh(\varphi)=\{A\subset \mathbb{N}:\lim_{t\to\infty}\varphi(A\setminus\{1,2,...,t\})=0\}.
\end{equation*}
At this stage, we consider an important category of set-theoretical objects, namely ``ideals".
A family $\mathcal{I}\subset\mathcal{P}(\mathbb{N})$ is referred to as an ideal \cite{kostyrko2000convergence} on $\mathbb{N}$ if it fulfills the following conditions:
\begin{itemize}
	\item $\varnothing\in\mathcal{I}$,
	\item if $A,B\in \mathcal{I}$ then $A\cup B\in\mathcal{I}$,
	\item if $A\subset B$ and $B\in\mathcal{I}$ then $A\in \mathcal{I}$.
\end{itemize}
An ideal $\mathcal{I}$ is called non-trivial if $\mathcal{I}\neq\varnothing$ and $\mathcal{I}\neq\mathbb{N}$. We denote by $\mathcal{I}_{fin}$ the ideal of finite subsets of $\mathbb{N}$ and by $\mathcal{I}_\delta=\{A\subseteq \mathbb{N}: \delta(A)=0\}$ the ideal of subsets of $\mathbb{N}$ with natural density zero, where $\delta$ denotes the natural density  \cite{fast1951convergence, fridy1985statistical,fridy1993statistical, vsalat1980statistically,steinhaus1951convergence}. A non-trivial ideal $\mathcal{I}$ is considered admissible if $\{t\}\in\mathcal{I}$ for each $t\in\mathbb{N}$, i.e., $\mathcal{I}\supseteq \mathcal{I}_{fin}$. 
Furthermore, an ideal $\mathcal{I}$ is a $P$-ideal \cite{kostyrko2000convergence} if for every sequence $\{A_t\}_{t\in\mathbb{N}}$ of sets in $\mathcal{I}$ there exists  $A\in\mathcal{I}$ such that  $A_t\setminus A$ is finite for each $t\in\mathbb{N}.$ Examples of $P$-ideals include $\mathcal{I}_{fin}$ and $\mathcal{I}_\delta$. A $P$-ideal $\mathcal{I}$ is termed analytic if there exists a lscsm $\varphi$ on $\mathbb{N}$ such that $\mathcal{I}=Exh(\varphi)$ (visit \cite{solecki}). For an ideal $\mathcal{I}$, we will write $\mathcal{F}(\mathcal{I})=\{A\subseteq\mathbb{N}:\mathbb{N}\setminus A\in\mathcal{I}\}$ to denote the dual filter of $\mathcal{I}$.\\

In a different direction, Phu \cite{phu2001rough,xuan2003rough} began an exploration of the theory of rough convergence of a sequence, which serves as an extension of classical convergence in normed spaces, where ``degree of roughness" is recognized as a crucial factor. He also presented several intriguing properties of the set of rough limit points of a sequence in normed spaces, which were quite fascinating and pertinent to this study. Before we proceed, let us formally present the concept of \textit{rough convergence} of a sequence.
\begin{definition}\cite[Page no. 199]{phu2001rough} \cite[Eq (1.1)]{xuan2003rough} Let $r$ be a non-negative real number. A sequence  $\{x_n\}_{n\in \mathbb{N}}$ in a normed space $X$ is said to be  rough convergent  to $x_*$ w.r.t. the degree of roughness $r$ (briefly, $r$-convergent), denoted by $x_n\xrightarrow{r}x_*,$  provided that
	\begin{equation*}~\mbox{for any}~ \varepsilon > 0,~\mbox{there exists}~ n_{\varepsilon} \in \mathbb{N}: n\geq n_{\varepsilon} ~\Rightarrow~ \|x_n - x_*\| \leq  r+ \varepsilon.\end{equation*}
	The  non-negative real number $r$ is known as ``degree of roughness'' and the set $LIM^r x_i=\left\{x_*\in X: x_n\xrightarrow{r}x_*\right\}$ is called the $r$-limit set of the sequence $\{x_n\}_{n\in \mathbb{N}}.$
\end{definition}
Subsequently, this notion has been extended to rough ideal convergence as follows:
\begin{definition}\cite{dundar2014rough,PCD}
	A sequence  $\{x_n\}_{n\in \mathbb{N}}$ in a normed space $X$ is said to be  rough $\mathcal{I}$-convergent  to $x_*$ w.r.t. the degree of roughness $r\geq 0$ (briefly, $r-\mathcal{I}$-convergent),  provided that
	\begin{equation*}
		\{n\in\mathbb{N}:\|x_n - x_*\|>  r+ \varepsilon\}\in\mathcal{I}
		~\mbox{ for every }~ \varepsilon > 0.\end{equation*}
\end{definition}
For a comprehensive overview of established results on rough convergence, along with relevant references, visit \cite{aytarcore,aytar2008rough,aytar2017rough,listan2011characterization,rahaman2024rough}.\\

 On the other hand, convergence of random variables in probability is a central concept in probability theory, expressing the concept that a sequence of random variables converges toward a specific random variable with probability becoming arbitrarily close to $1$. Formally,
\begin{definition}\cite{rachev2013methods,resnick2003probability}
	A sequence $\{X_n\}_{n\in\mathbb{N}}$ of $U$-valued random variables, defined on a sample space $S$, 
	is said to converge in probability to a $U$-valued random variable $Y$ if, for any 
	$\varepsilon>0$, the probability that $d(X_n,Y)>\varepsilon$ tends to $0$ as $n\to\infty$.
\end{definition}
Over the years, several generalizations of this concept have been proposed, notably by \cite{ghosal2013statistical,jena2020various, rambaud2011note,csenccimen2013statistical,csenccimen2008strong,csenccimen2016exhaustive}, which extend the classical framework to encompass more general settings and refined modes of convergence.\\

We are now ready to present our main definition, namely Definition \ref{defmain}, that generalizes the notion of convergence in probability for sequences of random variables.
	\begin{definition}\label{defmain}
		Assume that $r\geq 0$ and $\{X_n\}_{n\in\mathbb{N}}$ is a sequence in $\mathfrak{X}$. Then $\{X_n\}_{n\in\mathbb{N}}$ is said to be rough $\mathcal{I}$-convergent in probability to $X_*\in\mathfrak{X}$ w.r.t. the degree of roughness $r$ (briefly, $r-\mathcal{I}^{\mathbb{P}}$ convergence), denoted by $X_n\xrightarrow[r]{\mathcal{I}^{\mathbb{P}}} X_*$, provided that
		$$\{n\in\mathbb{N}:\mathbb{P}(d(X_n,X_*)> r+\varepsilon)>\delta\}\in\mathcal{I}~\mbox{ for every $\varepsilon,\delta > 0$}.$$ 
		We denote $\mathcal{I}^{\mathbb{P}}$-$LIM^rX_i=\left\{X_*\in \mathfrak{X}:X_n\xrightarrow[r]{\mathcal{I}^{\mathbb{P}}} X_* \right\}$ the set of all $r$-$\mathcal{I}^{\mathbb{P}}$ limits of $\{X_n\}_{n\in\mathbb{N}}$.  Furthermore, when $r=0$, we call $\{X_n\}_{n\in\mathbb{N}}$ to be $\mathcal{I}$-convergent in probability (briefly, $\mathcal{I}^{\mathbb{P}}$-convergent) to $X_*\in\mathfrak{X}$. Additionally, by replacing the ideal $\mathcal{I}$ by $\mathcal{I}_{fin}$ and $\mathcal{I}_{\delta}$, one obtains the notions of convergence in probability \cite{resnick2003probability} and statistical convergence in probability \cite{ghosal2013statistical}, respectively.
	\end{definition}
	
	\begin{remark}\label{impremark}
		Let $\{x_n\}_{n\in\mathbb{N}}$ be a sequence in $(U,d)$. For each $n\in\mathbb{N}$, we define $X_n\in\mathfrak{X}$ such that $\mathbb{P}(X_n=x_n)=1$ and $\mathbb{P}(X_n\neq x_n)=0$.
		 Note that, for each $n\in\mathbb{N}$, $X_n$ has one-point distribution at $x_n$. We call $\{X_n\}_{n\in\mathbb{N}}$ the associated sequence of random variables of the sequence $\{x_n\}_{n\in\mathbb{N}}$.
	\end{remark}
	The following result motivates the investigation of this new notion of rough ideal convergence in probability, as it establishes that rough ideal convergence of random variables in probability is a more generalized form of rough ideal convergence of sequences in metric spaces.
	\begin{proposition}
		Let $\{x_n\}_{n\in\mathbb{N}}$ be a sequence in $(U,d)$. If there exists $r\geq 0$ such that $\{x_n\}_{n\in\mathbb{N}}$ is $r$-$\mathcal{I}$ convergent to $x_*\in U$ then the associated sequence $\{X_n\}_{n\in\mathbb{N}}$ of random variables  satisfies $X_n\xrightarrow[r]{\mathcal{I}^{\mathbb{P}}} X_*$, where $\mathbb{P}(X_*=x_*)=1$.
	\end{proposition}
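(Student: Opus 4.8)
The plan is to reduce the probabilistic condition to the deterministic one by exploiting that both $X_n$ and $X_*$ are degenerate random variables. First I would observe that, since $\mathbb{P}(X_n=x_n)=1$ and $\mathbb{P}(X_*=x_*)=1$, the event $\{X_n=x_n\}\cap\{X_*=x_*\}$ has probability $1$; on this event $d(X_n,X_*)=d(x_n,x_*)$, a fixed constant. Consequently $d(X_n,X_*)=d(x_n,x_*)$ almost surely, and for any threshold $t\geq 0$ the quantity $\mathbb{P}(d(X_n,X_*)>t)$ equals $1$ when $d(x_n,x_*)>t$ and $0$ otherwise. In particular it is $\{0,1\}$-valued.

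Next, fix $\varepsilon,\delta>0$ and examine the set $A_{\varepsilon,\delta}=\{n\in\mathbb{N}:\mathbb{P}(d(X_n,X_*)>r+\varepsilon)>\delta\}$. Because the probability in question is either $0$ or $1$, I would split into two cases. If $\delta\geq 1$, the strict inequality $\mathbb{P}(d(X_n,X_*)>r+\varepsilon)>\delta$ can never hold, so $A_{\varepsilon,\delta}=\varnothing\in\mathcal{I}$. If $0<\delta<1$, then $\mathbb{P}(d(X_n,X_*)>r+\varepsilon)>\delta$ holds exactly when that probability equals $1$, i.e. when $d(x_n,x_*)>r+\varepsilon$. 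Hence $A_{\varepsilon,\delta}=\{n\in\mathbb{N}:d(x_n,x_*)>r+\varepsilon\}$ in this case.

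The hypothesis that $\{x_n\}_{n\in\mathbb{N}}$ is $r$-$\mathcal{I}$ convergent to $x_*$ gives precisely $\{n\in\mathbb{N}:d(x_n,x_*)>r+\varepsilon\}\in\mathcal{I}$ for every $\varepsilon>0$. Combining the two cases, $A_{\varepsilon,\delta}\in\mathcal{I}$ for all $\varepsilon,\delta>0$, which is exactly the defining condition for $X_n\xrightarrow[r]{\mathcal{I}^{\mathbb{P}}} X_*$, completing the argument.

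In honesty, this argument is essentially bookkeeping and there is no genuine obstacle. The only point requiring care is the passage to the almost-sure identity $d(X_n,X_*)=d(x_n,x_*)$: one must intersect the two full-probability events rather than treating $X_n$ and $X_*$ in isolation, so that the constancy of $d(X_n,X_*)$ (and hence the $\{0,1\}$-valued nature of the relevant probability) is correctly justified. The trivial but necessary handling of the edge case $\delta\geq 1$ should also be recorded explicitly, since it is what guarantees $A_{\varepsilon,\delta}\in\mathcal{I}$ even when the deterministic level set is not directly available.
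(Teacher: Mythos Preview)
Your proof is correct and follows essentially the same approach as the paper: both arguments exploit the degeneracy of $X_n$ and $X_*$ to identify $d(X_n,X_*)$ with the constant $d(x_n,x_*)$ almost surely, and then reduce the probabilistic index set to the deterministic one $\{n:d(x_n,x_*)>r+\varepsilon\}\in\mathcal{I}$. The paper argues via the inclusion $\{n:\mathbb{P}(d(X_n,X_*)>r+\varepsilon)>\delta\}\subseteq A(\varepsilon)$ without separating out the case $\delta\geq 1$, whereas you establish the exact equality for $0<\delta<1$ and treat $\delta\geq 1$ explicitly; this is a minor difference in presentation rather than substance.
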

	\begin{proof}
	Assume that $\{x_n\}_{n\in\mathbb{N}}$ is $r$-$\mathcal{I}$ convergent to $x_*$. Let $\varepsilon>0$ be given. Therefore, we have
	\begin{equation}\label{eq 1}
		A(\varepsilon)=\{n\in\mathbb{N}:d(x_n,x_*)>r+\varepsilon\}\in\mathcal{I}.
	\end{equation}
	Note that $\mathbb{P}(A_n)=1$ and $\mathbb{P}(A)=1$, where 
	$$A_n=\{\omega\in S: X_n(\omega)=x_n\}~\mbox{ and }~A=\{\omega\in S: X_*(\omega)=x_*\}.$$
 Observe that $\mathbb{N}\setminus A(\varepsilon)\neq\varnothing$ since $A(\varepsilon)\in \mathcal{I}$. So pick arbitrary $n\in \mathbb{N}\setminus A(\varepsilon)$. Then, in view of Eq (\ref{eq 1}), one obtains $\mathbb{P}(d(X_n,X_*)\leq r+\varepsilon)=\mathbb{P}(A_n\cap A)=1$. Therefore, for any $\delta>0$, we can write 
	$$\{n\in\mathbb{N}:\mathbb{P}(d(X_n,X_*)> r+\varepsilon)>\delta\}\subseteq A(\varepsilon).$$
	As a consequence, we obtain that $X_n\xrightarrow[r]{\mathcal{I}^{\mathbb{P}}} X_*$.
	\end{proof}
	
It is well-known folklore result that the $Ky$ $Fan$ metric $\rho$ metrizes converges in probability. This naturally leads to the question of whether a corresponding generalization holds within the framework of ideal convergence. The following result provides an affirmative answer.
\begin{proposition}\label{pro 1}
Let $\{X_n\}_{n\in\mathbb{N}}$ be a sequence of random variables taking values in the metric space $(U,d)$. Then $\{X_n\}_{n\in\mathbb{N}}$ is $\mathcal{I}$-convergent in probability to $X\in \mathfrak{X}$ if and only if it is $\mathcal{I}$-convergent to $X$ via the $Ky$ $Fan$ metric $\rho$.
\end{proposition}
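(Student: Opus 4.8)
The plan is to reduce the whole equivalence to a single elementary lemma connecting the $Ky$ $Fan$ metric with the tail probabilities, namely the biconditional that for every $\varepsilon>0$ one has $\rho(X,Y)\le\varepsilon$ if and only if $\mathbb{P}(d(X,Y)>\varepsilon)\le\varepsilon$. To establish it, I would first record that the map $t\mapsto\mathbb{P}(d(X,Y)>t)$ is non-increasing and right-continuous (since $\{d(X,Y)>s\}\uparrow\{d(X,Y)>t\}$ as $s\downarrow t$), where measurability of $d$---guaranteed by the separability of $(U,d)$---is exactly what makes these tail probabilities meaningful. Consequently the set $E=\{\varepsilon>0:\mathbb{P}(d(X,Y)>\varepsilon)\le\varepsilon\}$ is an upward unbounded interval, and right-continuity forces the infimum $\rho(X,Y)=\inf E$ to be attained, i.e. $\mathbb{P}(d(X,Y)>\rho(X,Y))\le\rho(X,Y)$, so that $\rho(X,Y)\in E$. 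From this the biconditional is immediate: if $\mathbb{P}(d(X,Y)>\varepsilon)\le\varepsilon$ then $\varepsilon\in E$ and hence $\rho(X,Y)\le\varepsilon$; conversely if $\rho(X,Y)\le\varepsilon$ then $\varepsilon\ge\inf E$ with $E$ an up-set containing its infimum, so $\varepsilon\in E$. Passing to complements, $\rho(X,Y)>\varepsilon$ if and only if $\mathbb{P}(d(X,Y)>\varepsilon)>\varepsilon$.

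For the forward implication, assume $\{X_n\}_{n\in\mathbb{N}}$ is $\mathcal{I}$-convergent in probability to $X$ and fix $\eta>0$. By the lemma applied with $\varepsilon=\eta$, the set $\{n\in\mathbb{N}:\rho(X_n,X)>\eta\}$ coincides exactly with $\{n\in\mathbb{N}:\mathbb{P}(d(X_n,X)>\eta)>\eta\}$, and the latter lies in $\mathcal{I}$ upon taking $\varepsilon=\delta=\eta$ in Definition \ref{defmain} with $r=0$. Since $\eta>0$ was arbitrary, $\{X_n\}_{n\in\mathbb{N}}$ is $\mathcal{I}$-convergent to $X$ in the metric $\rho$.

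For the converse, assume $\mathcal{I}$-convergence in $\rho$ and fix $\varepsilon,\delta>0$. Put $\eta=\min\{\varepsilon,\delta\}$. If $\mathbb{P}(d(X_n,X)>\varepsilon)>\delta$, then, using $\eta\le\varepsilon$ together with monotonicity of the tail and $\eta\le\delta$, one obtains $\mathbb{P}(d(X_n,X)>\eta)\ge\mathbb{P}(d(X_n,X)>\varepsilon)>\delta\ge\eta$, whence the lemma gives $\rho(X_n,X)>\eta$. Therefore $\{n\in\mathbb{N}:\mathbb{P}(d(X_n,X)>\varepsilon)>\delta\}\subseteq\{n\in\mathbb{N}:\rho(X_n,X)>\eta\}$, and the right-hand set belongs to $\mathcal{I}$ by hypothesis; the downward closure axiom of the ideal $\mathcal{I}$ then places the left-hand set in $\mathcal{I}$ as well. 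As $\varepsilon,\delta>0$ were arbitrary, $\{X_n\}_{n\in\mathbb{N}}$ is $\mathcal{I}$-convergent in probability to $X$. The only genuine obstacle here is the lemma itself---specifically, verifying that the infimum defining $\rho$ is attained, which is where right-continuity of the tail function and the measurability of $d$ enter; once that is in hand, both directions amount to collapsing the two tolerances to a single threshold and invoking the subset axiom of the ideal.
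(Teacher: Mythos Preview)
Your proof is correct and follows essentially the same approach as the paper: both hinge on the attainment of the infimum in the definition of $\rho$ and then collapse the two tolerances $\varepsilon,\delta$ to a single threshold. Your version is slightly cleaner in that you isolate the biconditional $\rho(X,Y)>\varepsilon\iff\mathbb{P}(d(X,Y)>\varepsilon)>\varepsilon$ as an explicit lemma (and justify the attainment via right-continuity), whereas the paper invokes this fact inline; in the converse you use the single set $\{n:\rho(X_n,X)>\min\{\varepsilon,\delta\}\}$ where the paper uses the union $A(\varepsilon)\cup A(\delta)$, but these coincide.
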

\begin{proof}
First assume that $\{X_n\}_{n\in\mathbb{N}}$ is $\mathcal{I}$-convergent in probability to $X\in \mathfrak{X}$. Let $\varepsilon>0$ be given. Then 
$$
A(\varepsilon)=\{n\in\mathbb{N}:\mathbb{P}(d(X_n,X_*)\geq \varepsilon)> \varepsilon\}\in \mathcal{I}.
$$
Let us set $B(\varepsilon)=\{n\in\mathbb{N}:\rho(X_n,X_*)\geq \varepsilon\}$. Note that, for any $n\notin A(\varepsilon)$, we have $\mathbb{P}(d(X_n,X_*)\geq \varepsilon)\leq\varepsilon$. This ensures that $\rho(X_n,X_*)<\varepsilon$, i.e.,  $n\notin B(\varepsilon)$. Thus we obtain  $B(\varepsilon)\subseteq A(\varepsilon)$. Since  $A(\varepsilon)\in\mathcal{I}$ and $\varepsilon>0$ was arbitrary, we conclude that $\{X_n\}_{n\in\mathbb{N}}$ is $\mathcal{I}$-convergent to $X$ via $\rho$.\\

Next, let us assume that $\{X_n\}_{n\in\mathbb{N}}$ is $\mathcal{I}$-convergent to $X$ via $\rho$. Let $\varepsilon,\delta>0$ and set $\rho_n=\rho(X_n,X)$. Then, for each $\eta>0$, we have $A(\eta):=\{n\in\mathbb{N}:\rho_n\geq \eta\}\in\mathcal{I}$. Now pick any $j\in \mathbb{N}\setminus (A(\varepsilon)\cup A(\delta))$ (note that such $j\in\mathbb{N}$ exists since $A(\varepsilon)\cup A(\delta)\in \mathcal{I}$). Therefore, observe that 
\begin{align*}
	\mathbb{P}(d(X_j,X)>\varepsilon)&\leq \mathbb{P}(d(X_j,X)>\rho_j)\\
	&\leq \rho_j~\mbox{ (since the infimum value in the definition of $\rho(X_j,X)$ is reached)}\\
	&<\delta.
\end{align*}
Thus  $\{j\in\mathbb{N}:\mathbb{P}(d(X_j,X)>\varepsilon)\geq\delta\}\subseteq A(\varepsilon)\cup A(\delta)$, i.e., $\{j\in\mathbb{N}:\mathbb{P}(d(X_j,X)>\varepsilon)\geq\delta\}\in\mathcal{I}$. Hence we deduce that  $\{X_n\}_{n\in\mathbb{N}}$ is $\mathcal{I}$-convergent in probability to $X\in \mathfrak{X}$.
\end{proof}

The discussion now shifts to the central theme of this article, with the subsequent sections organized as follows. Section \ref{secti2} is devoted to establishing various characterizations of the set $\mathcal{I}^{\mathbb{P}}$-$LIM^rX_i$ w.r.t. the $Ky$ $Fan$ metric. In particular, it is established that, for an analytic $P$-ideal $\mathcal{I}$, the set  $\mathcal{I}^{\mathbb{P}}$-$LIM^rX_i$ is a Borel set of $F_{\sigma\delta}$ type in ($\mathfrak{X}^0,\rho)$
(Theorem \ref{theo1}). Section \ref{secti3} naturally leads to the introduction of the notions of rough ideal limit and cluster points in probability. In contrast to the case of real sequences, this framework gives rise to two distinct types of cluster points in probability, namely weak rough ideal cluster points and strong rough ideal cluster points in probability. In this section, we undertake a thorough investigation of the interrelations among these three sets. In particular, we show that the set of strong rough ideal cluster points in probability is always closed, whereas the weak set is only conditionally closed in the metric space ($\mathfrak{X}^0,\rho)$ (Propositions \ref{strong closed} and \ref{weakclosed}). Finally, we show that an admissible ideal is maximal precisely when the rough ideal limit set and the set of strong rough cluster points in probability coincide (Theorem \ref{maximal th}).

	\section{Some characterizations of the set $\mathcal{I}^{\mathbb{P}}$-$LIM^rX_i$ via $Ky$ $Fan$ metric}\label{secti2}
In this section, we explore the properties of the limit set $\mathcal{I}^{\mathbb{P}}$-$LIM^rX_i$ in the $Ky$ $Fan$ metric space $(\mathfrak{X^0},\rho)$, providing a detailed analysis of its structural and topological characteristics. Our first result portrays that the diameter of $\mathcal{I}^{\mathbb{P}}$-$LIM^rX_i$ does not exceed $2r$. 
\begin{theorem}
The set $\mathcal{I}^{\mathbb{P}}$-$LIM^rX_i$ is bounded in $(\mathfrak{X^0},\rho)$ and $diam_{\rho}(\mathcal{I}^{\mathbb{P}}$-$LIM^rX_i)\leq \min\{1,2r\}$. In general, the diametric bound cannot be reduced further.
\end{theorem}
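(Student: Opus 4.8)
The plan is to decompose the statement into an easy universal estimate, a triangle-inequality argument for the limit set, and an explicit extremal example. First I would record the elementary fact that $\rho(X,Y)\le 1$ for every $X,Y\in\mathfrak{X}$: since $\mathbb{P}(d(X,Y)>1)\le 1$ holds automatically, the value $\varepsilon=1$ always belongs to the set defining $\rho(X,Y)$, so the whole space $(\mathfrak{X}^0,\rho)$ has diameter at most $1$. This simultaneously gives boundedness of $\mathcal{I}^{\mathbb{P}}$-$LIM^rX_i$ and accounts for the truncation by $1$ in $\min\{1,2r\}$.

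For the bound by $2r$, fix $X_*,Y_*\in\mathcal{I}^{\mathbb{P}}$-$LIM^rX_i$ and $\eta>0$, and set $\varepsilon=\eta/2$. For an arbitrary $\delta>0$, the two sets $A=\{n:\mathbb{P}(d(X_n,X_*)>r+\varepsilon)>\delta\}$ and $B=\{n:\mathbb{P}(d(X_n,Y_*)>r+\varepsilon)>\delta\}$ lie in $\mathcal{I}$ by Definition \ref{defmain}, hence so does $A\cup B$; since $\mathcal{I}$ is non-trivial, $\mathbb{N}\setminus(A\cup B)\ne\varnothing$ and I may pick one index $n_0$ outside both, for which $\mathbb{P}(d(X_{n_0},X_*)>r+\varepsilon)\le\delta$ and $\mathbb{P}(d(X_{n_0},Y_*)>r+\varepsilon)\le\delta$. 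The triangle inequality for $d$ gives the event inclusion $\{d(X_*,Y_*)>2r+\eta\}\subseteq\{d(X_{n_0},X_*)>r+\varepsilon\}\cup\{d(X_{n_0},Y_*)>r+\varepsilon\}$, so by subadditivity $\mathbb{P}(d(X_*,Y_*)>2r+\eta)\le 2\delta$. As the left-hand side does not depend on $\delta$, letting $\delta\downarrow 0$ forces $\mathbb{P}(d(X_*,Y_*)>2r+\eta)=0$; thus $2r+\eta$ qualifies in the infimum defining $\rho(X_*,Y_*)$, giving $\rho(X_*,Y_*)\le 2r+\eta$. Letting $\eta\downarrow 0$ and combining with the universal bound yields $\rho(X_*,Y_*)\le\min\{1,2r\}$, hence $diam_{\rho}(\mathcal{I}^{\mathbb{P}}$-$LIM^rX_i)\le\min\{1,2r\}$.

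For sharpness, I would exhibit a single example realizing the bound for every $r$. Take $U=\mathbb{R}$ with the usual metric and the constant sequence $x_n=0$, with associated random variables $X_n\equiv 0$. Then $d(X_n,X_*)=|X_*|$ is independent of $n$, so each set in Definition \ref{defmain} is either $\varnothing$ or $\mathbb{N}$; non-triviality of $\mathcal{I}$ forces it to be empty, and a short computation identifies $\mathcal{I}^{\mathbb{P}}$-$LIM^rX_i=\{X_*\in\mathfrak{X}:|X_*|\le r \text{ almost surely}\}$. Choosing the constants $X_*\equiv r$ and $Y_*\equiv -r$, both of which lie in this set, we have $d(X_*,Y_*)=2r$ almost surely, and evaluating the Ky Fan metric on an almost surely constant distance gives $\rho(X_*,Y_*)=\min\{1,2r\}$; hence the diameter equals $\min\{1,2r\}$ and cannot be reduced.

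The main obstacle is the second paragraph, specifically the simultaneous control of both probabilities at a single index $n_0$: this is exactly where non-triviality of $\mathcal{I}$ is essential, since it guarantees that the union of the two small index sets cannot exhaust $\mathbb{N}$. Once such an $n_0$ is secured, the union bound and the passage $\delta\downarrow 0$ are routine. A secondary point to keep in mind throughout is the truncation at $1$ built into $\rho$, which is what replaces the classical bound $2r$ for rough limit sets in normed spaces by $\min\{1,2r\}$.
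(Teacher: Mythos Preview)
Your upper-bound argument is essentially identical to the paper's: both pick an index outside the union of two $\mathcal{I}$-sets, apply the triangle inequality for $d$ followed by subadditivity of $\mathbb{P}$, and then send the free parameters to zero. The only cosmetic difference is that the paper halves $\varepsilon$ and $\delta$ at the outset to get the bound $\le\delta$, whereas you keep $\delta$ and get $\le 2\delta$; either way the conclusion $\mathbb{P}(d(X_*,Y_*)>2r+\eta)=0$ follows.

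For sharpness your route genuinely differs, and in a useful direction. The paper fixes the specific summable ideal $\mathcal{I}_{1/n}$, builds a non-degenerate random sequence whose ``bad'' indices sit on the $\mathcal{I}_{1/n}$-small set $\{2^m:m\in\mathbb{N}\}$, and then restricts attention to $0<2r<1$ in order to read off $\rho(X_*,Y_*)=2r$ for the constants at $\pm r$. Your constant sequence $X_n\equiv 0$ is simpler, works for every non-trivial ideal, and treats all $r\ge 0$ uniformly: the limit set is exactly $\{X_*:|X_*|\le r\text{ a.s.}\}$, and the pair $X_*\equiv r$, $Y_*\equiv -r$ gives $\rho(X_*,Y_*)=\min\{1,2r\}$ directly, covering also the regime $2r\ge 1$ that the paper does not explicitly address. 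The paper's construction does have the incidental virtue of showing the bound is sharp even for genuinely random, non-degenerate sequences and for ideals strictly larger than $\mathcal{I}_{fin}$, but as a proof of the stated theorem your example is cleaner and more complete.
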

\begin{proof}
Let us pick  arbitrary $X_*,Y_*\in$$\mathcal{I}^{\mathbb{P}}$-$LIM^rX_i$. Let $\varepsilon,\delta>0$. Then we have 
$$
A(\varepsilon,\delta)=\{n\in\mathbb{N}:\mathbb{P}(d(X_n,X_*)\geq r+\frac{\varepsilon}{2})>\frac{\delta}{2}\}\in \mathcal{I},$$
$$\mbox{and }~B(\varepsilon,\delta)=\{n\in\mathbb{N}:\mathbb{P}(d(X_n,Y_*)\geq r+\frac{\varepsilon}{2})>\frac{\delta}{2}\}\in \mathcal{I}.
$$
Now fix any $j\in\mathbb{N}\setminus (A(\varepsilon,\delta)\cup B(\varepsilon,\delta))$. Therefore, we can write
\begin{align*}
	\mathbb{P}(d(X_*,Y_*)\geq 2r+\varepsilon)&\leq \mathbb{P}(d(X_j,X_*)\geq {r}+\frac{\varepsilon}{2})+\mathbb{P}(d(X_j,Y_*)\geq {r}+\frac{\varepsilon}{2})\\
	&\leq\delta.
\end{align*}
Since $\varepsilon,\delta>0$ were chosen arbitrarily, we get that $\mathbb{P}(d(X_*,Y_*)\geq 2r)=0$. Consequently, we obtain that $\rho(X_*,Y_*)\leq 2r$. Since  $\rho(X,Y)\leq 1$ for all $X,Y\in \mathfrak{X^0}$, we conclude that $diam_{\rho}(\mathcal{I}^{\mathbb{P}}$-$LIM^rX_i)\leq \min\{1,2r\}$.\\

Next, we consider the ideal $\mathcal{I}_{\frac{1}{n}}=\{A\subset\mathbb{N}:\sum_{n\in A}\frac{1}{n}<\infty\}$ and the sequence $\{X_n\}_{n\in\mathbb{N}}$ of real valued random variables such that 
	\begin{align*}
	X_{n}\in
	\begin{cases}
		\{-5,5\}~\mbox{ with }~ \mathbb{P}(X_n=-5)=\mathbb{P}(X_n=5)~&\mbox{ if }~n\in \{2^m: m\in\mathbb{N}\}, \\
	\{0,1\}~\mbox{ with }~ \mathbb{P}(X_n=0)=1-\frac{1}{n}~\mbox{ and }~ \mathbb{P}(X_n=1)=\frac{1}{n}~&\mbox{ elsewhere}.
	\end{cases}
\end{align*}
Let $r\in\mathbb{R}$ be such that  $0<2r<1$. We now consider $X_*,Y_*\in\mathfrak{X}(\mathbb{R})$  such that $$\mathbb{P}(X_*=r)=\mathbb{P}(Y_*=-r)=1.$$
 Then, for any $\varepsilon,\delta>0$, we have
 $$\{n\in\mathbb{N}:\mathbb{P}(|X_n-X_*|>r+\varepsilon)>\delta\}\subset^*\{2^m:m\in\mathbb{N}\}.$$
As a consequence, $X_*\in $$\mathcal{I}_{\frac{1}{n}}^{\mathbb{P}}$-$LIM^rX_i$. Likewise, we can infer that $Y_*\in $$\mathcal{I}_{\frac{1}{n}}^{\mathbb{P}}$-$LIM^rX_i$. Now it is easy to observe that $\rho(X_*,Y_*)=2r$. Subsequently, $diam_{\rho}(\mathcal{I}_{\frac{1}{n}}^{\mathbb{P}}$-$LIM^rX_i)=2r$. This particular instance demonstrates that the diametric bound cannot be further diminished.
\end{proof}

\let\thefootnote\relax\footnotetext{For any two subsets $A$ and $B$ of $\mathbb{N}$ we will denote $A\subset^*B$ if $A\setminus B$ is finite and $A=^*B$ if $A\Delta B$ is finite.}
A fundamental topological property of the limit set $\mathcal{I}^{\mathbb{P}}$-$LIM^rX_i$ is given by the subsequent result.
\begin{theorem}
The set  $\mathcal{I}^{\mathbb{P}}$-$LIM^rX_i$ is closed in $(\mathfrak{X^0},\rho)$.
\end{theorem}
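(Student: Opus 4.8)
The plan is to show that the set $\mathcal{I}^{\mathbb{P}}$-$LIM^rX_i$ contains all its limit points in the $Ky~Fan$ metric $\rho$. So I would fix a sequence $\{Y_k\}_{k\in\mathbb{N}}$ in $\mathcal{I}^{\mathbb{P}}$-$LIM^rX_i$ with $\rho(Y_k,Y_*)\to 0$ for some $Y_*\in\mathfrak{X}^0$, and prove that $Y_*\in\mathcal{I}^{\mathbb{P}}$-$LIM^rX_i$. The goal is thus to verify, for arbitrary $\varepsilon,\delta>0$, that the set $\{n\in\mathbb{N}:\mathbb{P}(d(X_n,Y_*)>r+\varepsilon)>\delta\}$ belongs to $\mathcal{I}$.

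First I would choose, using $\rho(Y_k,Y_*)\to 0$, an index $k$ large enough so that $\rho(Y_k,Y_*)<\eta$ for a small parameter $\eta$ to be tuned against $\varepsilon$ and $\delta$; by the definition of the $Ky~Fan$ metric this gives $\mathbb{P}(d(Y_k,Y_*)>\eta)\leq\eta$. Next, since $Y_k\in\mathcal{I}^{\mathbb{P}}$-$LIM^rX_i$, the set $A=\{n\in\mathbb{N}:\mathbb{P}(d(X_n,Y_k)>r+\varepsilon')>\delta'\}$ lies in $\mathcal{I}$ for suitable $\varepsilon',\delta'$. The core estimate is a triangle-inequality splitting: on the event $d(X_n,Y_*)>r+\varepsilon$, at least one of $d(X_n,Y_k)>r+\varepsilon'$ or $d(Y_k,Y_*)>\eta$ must hold, provided $\varepsilon'+\eta\leq\varepsilon$. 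This yields the inclusion of events and hence the bound
\begin{equation*}
\mathbb{P}(d(X_n,Y_*)>r+\varepsilon)\leq \mathbb{P}(d(X_n,Y_k)>r+\varepsilon')+\mathbb{P}(d(Y_k,Y_*)>\eta)\leq \mathbb{P}(d(X_n,Y_k)>r+\varepsilon')+\eta.
\end{equation*}

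With this estimate in hand, I would fix the parameters concretely, e.g. $\eta=\tfrac{\delta}{2}$ and $\varepsilon'=\tfrac{\varepsilon}{2}$ with $\eta\leq\tfrac{\varepsilon}{2}$ (shrinking $\eta$ further if necessary), and set $\delta'=\tfrac{\delta}{2}$. Then for every $n\notin A=\{n:\mathbb{P}(d(X_n,Y_k)>r+\tfrac{\varepsilon}{2})>\tfrac{\delta}{2}\}$ the displayed bound gives $\mathbb{P}(d(X_n,Y_*)>r+\varepsilon)\leq\tfrac{\delta}{2}+\tfrac{\delta}{2}=\delta$, so that $\{n:\mathbb{P}(d(X_n,Y_*)>r+\varepsilon)>\delta\}\subseteq A\in\mathcal{I}$, and the hereditary property of the ideal finishes the argument. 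The main obstacle I anticipate is purely bookkeeping: coordinating the three small parameters $\varepsilon,\delta,\eta$ so that the triangle inequality on events and the $Ky~Fan$ probability bound fit together, while being careful about the strict-versus-non-strict inequalities in the definition of $\rho$ (the infimum being attained, as already used in Proposition \ref{pro 1}). No deep new idea is needed beyond the metric triangle inequality and the subadditivity of $\mathbb{P}$.
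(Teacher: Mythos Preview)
Your proposal is correct and follows essentially the same route as the paper's proof: pick a nearby $Y_k$ (the paper calls it $Y_{j_{\varepsilon,\delta}}$), use the event-level triangle inequality $\{d(X_n,Y_*)>r+\varepsilon\}\subseteq\{d(X_n,Y_k)>r+\tfrac{\varepsilon}{2}\}\cup\{d(Y_k,Y_*)>\tfrac{\varepsilon}{2}\}$ together with subadditivity of $\mathbb{P}$, and then invoke $Y_k\in\mathcal{I}^{\mathbb{P}}\text{-}LIM^rX_i$ to place the bad set in $\mathcal{I}$. The only cosmetic difference is that the paper appeals directly to ``$\rho$ metrizes convergence in probability'' to obtain $\mathbb{P}(d(Y_k,Y_*)\geq\tfrac{\varepsilon}{2})\leq\tfrac{\delta}{2}$ in one step, whereas you go through the Ky~Fan definition with an auxiliary parameter $\eta$; both arrive at the same inclusion and the same conclusion.
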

\begin{proof}
Let $\{Y_n\}_{n\in\mathbb{N}}$ be a sequence in $\mathcal{I}^{\mathbb{P}}$-$LIM^rX_i$ such that $\{Y_n\}_{n\in\mathbb{N}}$ is $\rho$-convergent to $X_*\in \mathfrak{X^0}$. Since $\rho$ metrizes convergence in probability, for any $\varepsilon,\delta>0$ there exists $j_{\varepsilon,\delta}\in\mathbb{N}$ such that 
\begin{equation*}
\mathbb{P}(d(Y_{j_{\varepsilon,\delta}},X_*)\geq \frac{\varepsilon}{2})\leq \frac{\delta}{2}
\end{equation*}
Therefore, observe that 
\begin{align*}
	\mathbb{P}(d(X_n,X_*)>r+\varepsilon)&\leq 	\mathbb{P}(d(X_n,Y_{j_{\varepsilon,\delta}})>r+\frac{\varepsilon}{2})+\mathbb{P}(d(Y_{j_{\varepsilon,\delta}},X_*)>\frac{\varepsilon}{2})\\
	&\leq \mathbb{P}(d(X_n,Y_{j_{\varepsilon,\delta}})>r+\frac{\varepsilon}{2})+\frac{\delta}{2}.
\end{align*}
Thus we obtain that $$\{n\in\mathbb{N}:\mathbb{P}(d(X_n,X_*)>r+\varepsilon)>\delta\}\subseteq \{n\in\mathbb{N}:\mathbb{P}(d(X_n,Y_{j_{\varepsilon,\delta}})\geq r+\frac{\varepsilon} {2})>\frac{\delta}{2}\}.$$
Since $Y_{j_{\varepsilon,\delta}}\in\mathcal{I}^{\mathbb{P}}$-$LIM^rX_i$, we have $\{n\in\mathbb{N}:\mathbb{P}(d(X_n,X_*)>r+\varepsilon)>\delta\}\in \mathcal{I}$. As a consequence, $X_*\in\mathcal{I}^{\mathbb{P}}$-$LIM^rX_i$. Hence we deduce that $\mathcal{I}^{\mathbb{P}}$-$LIM^rX_i$ is a closed set.
\end{proof}

Before moving to our next result, let us denote $\bar{\theta}_r(X_*)=\{Y\in\mathfrak{X}:\mathbb{P}(d(X_*,Y)\geq r)=0)\}$ and $\bar{B}_r(X_*)=\{Y\in\mathfrak{X}:\rho(X_*,Y)\leq r)\}$, where $X_*\in \mathfrak{X}$ and $r\geq 0$. The forthcoming theorem rigorously establishes that the limit set $\mathcal{I}^{\mathbb{P}}$-$LIM^rX_i$ is confined between the sets $\bar{\theta}_r(X_*)$ and $\bar{B}_r(X_*)$, thereby elucidating its precise positional bounds within the underlying metric structure.
\begin{theorem}
	If $\{X_n\}_{n\in\mathbb{N}}$ is $\mathcal{I}$-convergent in probability to $X_*\in \mathfrak{X}$ then for each $r\geq0$,
	$$ \bar{\theta}_r(X_*)\subseteq\mbox{$\mathcal{I}^{\mathbb{P}}$-$LIM^rX_i$}\subseteq \bar{B}_r(X_*).$$
\end{theorem}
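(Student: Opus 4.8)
The plan is to prove the two inclusions separately, each relying only on the triangle inequality for $d$ together with the standing hypothesis that $\{X_n\}_{n\in\mathbb{N}}$ is $\mathcal{I}$-convergent in probability to $X_*$, i.e. $X_n\xrightarrow[0]{\mathcal{I}^{\mathbb{P}}}X_*$. No new machinery beyond these ingredients, plus the legitimacy (guaranteed by admissibility of $\mathcal{I}$) of selecting an index outside an $\mathcal{I}$-small set, should be needed.

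For the left inclusion $\bar{\theta}_r(X_*)\subseteq\mathcal{I}^{\mathbb{P}}$-$LIM^rX_i$, I would fix $Y\in\bar{\theta}_r(X_*)$, so that $\mathbb{P}(d(X_*,Y)\geq r)=0$, equivalently $d(X_*,Y)<r$ almost surely. Given $\varepsilon,\delta>0$, the argument is pathwise: on the full-measure event $\{d(X_*,Y)<r\}$ the triangle inequality gives $d(X_n,X_*)\geq d(X_n,Y)-d(X_*,Y)>\varepsilon$ whenever $d(X_n,Y)>r+\varepsilon$. Hence $\mathbb{P}(d(X_n,Y)>r+\varepsilon)\leq\mathbb{P}(d(X_n,X_*)>\varepsilon)$ for every $n$, so that $\{n\in\mathbb{N}:\mathbb{P}(d(X_n,Y)>r+\varepsilon)>\delta\}\subseteq\{n\in\mathbb{N}:\mathbb{P}(d(X_n,X_*)>\varepsilon)>\delta\}$, and the right-hand set lies in $\mathcal{I}$ by the convergence hypothesis. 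Since $\varepsilon,\delta$ were arbitrary, $Y\in\mathcal{I}^{\mathbb{P}}$-$LIM^rX_i$.

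For the right inclusion $\mathcal{I}^{\mathbb{P}}$-$LIM^rX_i\subseteq\bar{B}_r(X_*)$, I would take $Y\in\mathcal{I}^{\mathbb{P}}$-$LIM^rX_i$ and show $\mathbb{P}(d(X_*,Y)>r)=0$, which forces $\rho(X_*,Y)\leq r$ and hence $Y\in\bar{B}_r(X_*)$; this step is a one-index specialization of the diameter estimate proved earlier. For $\varepsilon,\delta>0$ set $A=\{n:\mathbb{P}(d(X_n,X_*)>\tfrac{\varepsilon}{2})>\tfrac{\delta}{2}\}$ and $B=\{n:\mathbb{P}(d(X_n,Y)>r+\tfrac{\varepsilon}{2})>\tfrac{\delta}{2}\}$. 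Both belong to $\mathcal{I}$ (the first by the convergence-in-probability hypothesis, the second since $Y\in\mathcal{I}^{\mathbb{P}}$-$LIM^rX_i$), hence so does $A\cup B$; as $\mathcal{I}$ is nontrivial, $A\cup B\neq\mathbb{N}$ and I may fix $j\notin A\cup B$. For this single $j$, the triangle inequality yields the event inclusion $\{d(X_*,Y)>r+\varepsilon\}\subseteq\{d(X_*,X_j)>\tfrac{\varepsilon}{2}\}\cup\{d(X_j,Y)>r+\tfrac{\varepsilon}{2}\}$, whence $\mathbb{P}(d(X_*,Y)>r+\varepsilon)\leq\tfrac{\delta}{2}+\tfrac{\delta}{2}=\delta$. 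Letting $\delta\to0$ and then $\varepsilon\to0$, and invoking continuity of $\mathbb{P}$ from below along $\{d(X_*,Y)>r+\tfrac1n\}\uparrow\{d(X_*,Y)>r\}$, gives $\mathbb{P}(d(X_*,Y)>r)=0$.

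I do not anticipate a genuine obstacle here: the only care required is the bookkeeping of strict versus non-strict inequalities (which I would keep strict throughout to match Definition \ref{defmain}) and the almost-sure triangle-inequality argument in the first inclusion. The second inclusion reproduces, with a single representative index rather than two limit points, the reasoning already carried out to bound $diam_{\rho}$, so the essential content is the pathwise inequality $d(X_n,X_*)\geq d(X_n,Y)-d(X_*,Y)$ and the elementary observation that a finite union of $\mathcal{I}$-sets cannot exhaust $\mathbb{N}$.
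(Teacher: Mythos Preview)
Your argument is correct. For the first inclusion your pathwise version of the triangle inequality is exactly the mechanism the paper uses (the paper phrases it as a union bound $\mathbb{P}(d(X_n,Y)>r+\varepsilon)\le \mathbb{P}(d(X_n,X_*)\ge\varepsilon)+\mathbb{P}(d(X_*,Y)\ge r)$, but the content is identical).

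For the second inclusion you take a genuinely different route from the paper. The paper invokes Proposition~\ref{pro 1} to pass to the Ky~Fan metric, obtains $\rho(X_j,Y)\le r+\tfrac{\varepsilon}{2}$ and $\rho(X_j,X_*)\le\tfrac{\varepsilon}{2}$ for a suitable index $j$, and then applies the triangle inequality for $\rho$ directly to conclude $\rho(X_*,Y)\le r$. You instead stay at the event level in $(U,d)$, use the same single index $j\notin A\cup B$ to bound $\mathbb{P}(d(X_*,Y)>r+\varepsilon)\le\delta$, and after sending $\delta\to0$ and $\varepsilon\to0$ obtain the stronger statement $\mathbb{P}(d(X_*,Y)>r)=0$, from which $\rho(X_*,Y)\le r$ follows immediately. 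Your route is slightly more elementary (it does not need Proposition~\ref{pro 1}) and in fact yields a sharper intermediate conclusion: every $Y\in\mathcal{I}^{\mathbb{P}}$-$LIM^rX_i$ satisfies $d(X_*,Y)\le r$ almost surely, not merely $\rho(X_*,Y)\le r$. The paper's route, on the other hand, emphasizes that the Ky~Fan metric is the natural distance governing these sets, which is the thematic point of the section.
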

\begin{proof}
Assume that $\{X_n\}_{n\in\mathbb{N}}$ is $\mathcal{I}$-convergent in probability to $X_*\in \mathfrak{X}$. Let $\varepsilon,\delta>0$ be given. Then we have
$$
A(\varepsilon,\delta):=\{n\in\mathbb{N}:\mathbb{P}(d(X_n,X_*)>\varepsilon)>\delta\}\in\mathcal{I}.
$$
Now pick arbitrary $Y\in \bar{\theta}_r(X_*)$. So we get that $\mathbb{P}(d(X_*,Y)\geq r)=0$. Note that, we can write
\begin{align*}
\mathbb{P}(d(X_n,Y)>r+\varepsilon)&\leq \mathbb{P}(d(X_n,X_*)\geq\varepsilon)+\mathbb{P}(d(Y,X_*)\geq r)\\
&\leq  \mathbb{P}(d(X_n,X_*)\geq\varepsilon).
\end{align*}
With this, we have $\{n\in\mathbb{N}:\mathbb{P}(d(X_n,Y)>r+\varepsilon)>\delta\}\subseteq A(\varepsilon,\delta)$. Since 
  $A(\varepsilon,\delta)\in\mathcal{I}$, we conclude that $Y\in$$\mathcal{I}^{\mathbb{P}}$-$LIM^rX_i$, i.e., $\bar{\theta}_r(X_*)\subseteq\mathcal{I}^{\mathbb{P}}$-$LIM^rX_i$.\\
  
  Next, let us pick arbitrary $Y\in\mathcal{I}^{\mathbb{P}}$-$LIM^rX_i$. Therefore, we have 
  $$
  A(\varepsilon):=\{n\in\mathbb{N}:\mathbb{P}(d(X_n,Y)>r+\frac{\varepsilon}{2})>r+\frac{\varepsilon}{2}\}\in\mathcal{I}.
  $$
  Now, in view of Proposition \ref{pro 1}, we also have 
  $$
  B(\varepsilon):=\{n\in\mathbb{N}:\rho(X_n,X_*)>\frac{\varepsilon}{2}\}\in\mathcal{I}.
  $$
  Let us fix any $j\in \mathbb{N}\setminus(A(\varepsilon)\cup B(\varepsilon))$. Since $\mathbb{P}(d(X_j,Y)>r+\frac{\varepsilon}{2})\leq r+\frac{\varepsilon}{2}$, it is evident that $\rho(X_j,Y)\leq r+\frac{\varepsilon}{2}$. So, observe that 
  \begin{align*}
  \rho(X_*,Y)&\leq \rho(X_j,X_*)+\rho(X_j,Y)\\
  &\leq r+\varepsilon.
  \end{align*}
Since $\varepsilon>0$ was chosen arbitrarily, we have $\rho(X_*,Y)\leq r$, i.e., $Y\in \bar{B}_r(X_*)$. Thus we get that $\mathcal{I}^{\mathbb{P}}$-$LIM^rX_i\subseteq \bar{B}_r(X_*)$. Finally, by combining the preceding results, we may deduce that $ \bar{\theta}_r(X_*)\subseteq\mathcal{I}^{\mathbb{P}}$-$LIM^rX_i\subseteq \bar{B}_r(X_*).$
\end{proof}
We now present an example to illustrate that the set inclusions in the preceding result can be strict, as the next example substantiates our assertion. 
\begin{example}
	Let us consider the ideal $\mathcal{I}_\delta=\{A\subset\mathbb{N}:\delta(A)=0\}$. Suppose that $\{X_n\}_{n\in\mathbb{N}}$ is a sequence of i.i.d. random variables, where $X_n\sim Bernoulli(p)$ for each $n\in\mathbb{N}$ and $0<p<1$. Let us construct $\{Y_n\}_{n\in\mathbb{N}}$ such that
\begin{align*}
	Y_n=
	\begin{cases}
		2^n\displaystyle\prod_{i=1}^{n} X_i~&\mbox{ if }~ n\notin\{m^2:m\in\mathbb{N}\},\\
		\displaystyle\sum_{i=1}^{n}X_i ~&\mbox{ elsewhere}.
	\end{cases}
\end{align*}
Let $\varepsilon>0$ be given and  $n\in\mathbb{N}\setminus\{m^2:m\in\mathbb{N}\}$. Therefore, observe that 
\begin{align*}
	\mathbb{P}(|Y_n-0|>\varepsilon)&=\mathbb{P}(Y_n=2^n)\\
                               	   &=\mathbb{P}(X_i=1,\forall i\in [1,n])\\
                               	   &=\mathbb{P}(X_1=1)\mathbb{P}(X_2=1)...\mathbb{P}(X_n=1)\\
                               	   &=p^n\to 0~\mbox{ as }~n\to\infty.
\end{align*}
Consequently, $\{Y_n\}_{n\in\mathbb{N}}$ is $\mathcal{I}_\delta$-convergent in probability to the random variable $X_*=0$.\\

We now intend to show that $Y_n\xrightarrow[1]{\mathcal{I}_{\delta}^{\mathbb{P}}} Y_*$, where $\mathbb{P}(Y_*=1)=1$. Note that, for each $n\in\mathbb{N}\setminus\{m^2:m\in\mathbb{N}\}$, we have
\begin{align*}
\mathbb{P}(|Y_n-Y_*|<1+\varepsilon)&=\mathbb{P}(-\varepsilon<Y_n<2+\varepsilon)\\
                                   &=\mathbb{P}(Y_n=0)\\
                                   &=\mathbb{P}(X_i=0,~\mbox{ for some }~i\in [1,n])\\
                                   &=1-\mathbb{P}(X_i=1,\forall i\in [1,n])\\
                                   &=1-p^n\\
                                   \Rightarrow \mathbb{P}(|Y_n-Y_*|&\geq 1+\varepsilon)=p^n\to 0~\mbox{ as }~n\to\infty.
\end{align*}
Since $\{m^2:m\in\mathbb{N}\}\in\mathcal{I}$, we obtain that $Y_*\in\mathcal{I}^{\mathbb{P}}$-$LIM^{1}Y_i$. Now observe that $Y_*\notin \bar{\theta}_1(X_*)$ since $\mathbb{P}(|X_*-Y_*|\geq 1)=1$. This ensures that $\bar{\theta}_1(X_*)\subsetneq \mathcal{I}_{\delta}^{\mathbb{P}}$-$LIM^{1}Y_i$.\\

Next, we will show that $Z\notin \mathcal{I}_{\delta}^{\mathbb{P}}$-$LIM^{1}Y_i$, where $\mathbb{P}(Z=0)=\mathbb{P}(Z=2)$. Here, we choose $0<\varepsilon<\frac{1}{2}$. Then, for any $n\neq m^2$, we can write
\begin{align*}
\mathbb{P}(|Y_n-Z|>1+\varepsilon)=\mathbb{P}(\{(0,2),(2^n,0),(2^n,2)\})=\frac{3}{4}.
\end{align*}
This entails that $Z\notin \mathcal{I}_{\delta}^{\mathbb{P}}$-$LIM^{1}Y_i$. Now, it is evident that $Z\in \bar{B}_1(X_*)$ as $\bar{B}_1(X_*)=\mathfrak{X}(\mathbb{R})$. Consequently,  $\mathcal{I}_{\delta}^{\mathbb{P}}$-$LIM^{1}Y_i\subsetneq \bar{B}_1(X_*)$. Thus,  by combining the preceding results, we can conclude that $\bar{\theta}_1(X_*)\subsetneq\mathcal{I}_{\delta}^{\mathbb{P}}$-$LIM^{1}Y_i\subsetneq \bar{B}_1(X_*)$.
\end{example}
This section concludes with a demonstration that, for a certain class of ideals,  the set $\mathcal{I}^{\mathbb{P}}$-$LIM^rX_i$ is a Borel set of the  $F_{\sigma\delta}$ type in the metric space $(\mathfrak{X^0},\rho)$.
\begin{theorem}\label{theo1}
Let $r\geq 0$ and $\underline{X}=\{X_n\}_{n\in\mathbb{N}}$ be a sequence of random variables in  $\mathfrak{X}$. Then
for an analytic $P$-ideal $\mathcal{I}$, the limit set $\mathcal{I}^{\mathbb{P}}$-$LIM^r\underbar{X}$  is a Borel set of the  $F_{\sigma\delta}$ type in $(\mathfrak{X^0},\rho)$.
\end{theorem}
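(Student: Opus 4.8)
The plan is to exploit the representation $\mathcal{I}=Exh(\varphi)$ for a lscsm $\varphi$ and to rewrite the defining condition of the limit set as a countable Boolean combination of ``elementary'' sets, each cut down to a finite window of indices on which $\varphi$ can be controlled. Throughout I work on equivalence classes, so every probability below is a well-defined function on $(\mathfrak{X}^0,\rho)$.

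First I would isolate the single analytic input that does the real work: \emph{for each fixed $n\in\mathbb{N}$ and each $c>0$, the map $g_n^c:(\mathfrak{X}^0,\rho)\to[0,1]$ given by $g_n^c(X_*)=\mathbb{P}(d(X_n,X_*)>c)$ is lower semicontinuous}. To see this, suppose $X_*^{(m)}\to X_*$ in $\rho$, i.e. in probability. For $\eta>0$ the triangle inequality gives the inclusion $\{d(X_n,X_*)>c+\eta\}\cap\{d(X_*^{(m)},X_*)\le\eta\}\subseteq\{d(X_n,X_*^{(m)})>c\}$, whence $g_n^c(X_*^{(m)})\ge\mathbb{P}(d(X_n,X_*)>c+\eta)-\mathbb{P}(d(X_*^{(m)},X_*)>\eta)$; letting $m\to\infty$ (so the subtracted term vanishes) and then $\eta\downarrow0$ (continuity of $\mathbb{P}$ from below) yields $\liminf_m g_n^c(X_*^{(m)})\ge g_n^c(X_*)$. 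Consequently, for every threshold $\delta$ the sublevel set $\{X_*:g_n^c(X_*)\le\delta\}$ is \emph{closed} in $(\mathfrak{X}^0,\rho)$; this is exactly the feature that forces closedness of the building blocks, and it is the reason the \emph{strict} inequalities in Definition \ref{defmain} are convenient rather than an obstruction.

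Next I would reduce the quantifiers to countable ones. Writing $A(\varepsilon,\delta,X_*)=\{n:g_n^{r+\varepsilon}(X_*)>\delta\}$, monotonicity of $A$ in $\varepsilon$ and $\delta$ lets me replace ``for all $\varepsilon,\delta>0$'' by ``for all $p,q\in\mathbb{N}$'' with $\varepsilon=1/p,\ \delta=1/q$. Since $\mathcal{I}=Exh(\varphi)$ and $s\mapsto\varphi(A\setminus[1,s])$ is non-increasing, the membership $A(1/p,1/q,X_*)\in\mathcal{I}$ is equivalent to ``$\forall k\ \exists s:\varphi(A(1/p,1/q,X_*)\setminus[1,s])\le1/k$''; and lower semicontinuity of $\varphi$ rewrites the tail value as $\sup_N\varphi(A(1/p,1/q,X_*)\cap(s,N])$, so the condition becomes ``$\forall N:\varphi(A(1/p,1/q,X_*)\cap(s,N])\le1/k$''. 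Collecting these, I obtain the representation
$$\mathcal{I}^{\mathbb{P}}\text{-}LIM^{r}\underline{X}=\bigcap_{p,q,k}\ \bigcup_{s}\ \bigcap_{N}E_{p,q,k,s,N},\qquad E_{p,q,k,s,N}=\{X_*:\varphi(A(1/p,1/q,X_*)\cap(s,N])\le 1/k\}.$$

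Finally I would prove that each $E_{p,q,k,s,N}$ is closed, which finishes the argument: a countable intersection over $N$ of closed sets is closed, the union over $s$ is then $F_{\sigma}$, and the intersection over $p,q,k$ of $F_{\sigma}$ sets is $F_{\sigma\delta}$. For the closedness, take $X_*^{(m)}\to X_*$ with each $X_*^{(m)}\in E_{p,q,k,s,N}$. Since $(s,N]$ is finite, the traces $A(1/p,1/q,X_*^{(m)})\cap(s,N]$ take only finitely many values, so along a subsequence they equal a fixed $S\subseteq(s,N]$ with $\varphi(S)\le1/k$. For every $n\in(s,N]\setminus S$ we have $g_n^{r+1/p}(X_*^{(m)})\le1/q$ along that subsequence, and closedness of $\{g_n^{r+1/p}\le1/q\}$ forces $g_n^{r+1/p}(X_*)\le1/q$, i.e. $n\notin A(1/p,1/q,X_*)$; hence $A(1/p,1/q,X_*)\cap(s,N]\subseteq S$ and, by monotonicity of $\varphi$, $\varphi(A(1/p,1/q,X_*)\cap(s,N])\le\varphi(S)\le1/k$, so $X_*\in E_{p,q,k,s,N}$. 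The main obstacle throughout is the semicontinuity step together with the careful bookkeeping of strict versus non-strict inequalities: one must arrange matters so that the \emph{complementary} membership condition ($n\notin A$) is the closed one, because it is the containment $A(1/p,1/q,X_*)\cap(s,N]\subseteq S$ combined with monotonicity of $\varphi$ — not the reverse containment — that delivers the required bound in the limit.
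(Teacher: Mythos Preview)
Your proof is correct and follows the same overall strategy as the paper: represent $\mathcal{I}=Exh(\varphi)$, unwind the defining condition into a countable Boolean combination, and verify that the innermost building blocks are closed. The execution differs in two places worth noting. First, the paper collapses your two parameters $(\varepsilon,\delta)$ into one via the equivalence $\mathbb{P}(d(X_t,X_*)>\delta)>\delta \Leftrightarrow \rho(X_t,X_*)>\delta$, so its elementary sets are simply $\Theta_{t,k}=\{X_*:\rho(X_t,X_*)>r+1/k\}$, which are $\rho$-open by the very definition of the metric --- no separate semicontinuity lemma is needed. Second, where you show each finite-window set $E_{p,q,k,s,N}$ is closed by a sequential pigeonhole extraction, the paper rewrites the analogous set as $\bigcap_{F\in\mathscr{F}_{t,j}}\bigcup_{i\in F}(\mathfrak{X}^0\setminus\Theta_{i,k})$, with the observation (from lower semicontinuity of $\varphi$) that the $F$'s may be taken finite, so closedness is read off directly. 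Your route buys a slightly more portable ingredient --- the lower semicontinuity of $X_*\mapsto\mathbb{P}(d(X_n,X_*)>c)$ --- which would still work in settings where one does not want to pass through the Ky Fan metric; the paper's route is shorter because it leans on $\rho$ from the start.
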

\begin{proof}
	Since $\mathcal{I}$ is an analytic $P$-ideal,  $\mathcal{I}=\{A\subseteq\mathbb{N}:\lim_{t\to\infty}\varphi(A\setminus\{1,2,...,t\})=0\}$ for some lscsm $\varphi$ on $\mathbb{N}$ (see  \cite{solecki}).
	For $k,t\in\mathbb{N}$, we consider the open set $$\Theta_{t,k}=\left\{X_*\in \mathfrak{X}:\rho (X_t,X_*)>r+\frac{1}{k}\right\}.$$
Since the infimum in the definition of $\rho(X_t, X_*)$ is attained, we can write
	$$
	\mathbb{P}(d(X_t,X_*)>\delta)>\delta\Leftrightarrow \rho(X_t,X_*)>\delta, ~\mbox{ for every }~\delta>0.
	$$
The definition of rough ideal convergence in probability ensures that
	\begin{align}\label{eq 1a}
		&\mathcal{I}^{\mathbb{P}}\mbox{-}LIM^r\underbar{X}\nonumber\\
			=& \left\{X_*\in\mathfrak{X^0}: (\forall k\in\mathbb{N})~ \{t\in\mathbb{N}:\mathbb{P}(d(X_t,X_*)>r+\frac{1}{k})>r+\frac{1}{k} \}\in\mathcal{I}\right\}\nonumber\\
		=& \left\{X_*\in\mathfrak{X^0}: (\forall k\in\mathbb{N})~ \{t\in\mathbb{N}:X_*\in \Theta_{t,k} \}\in\mathcal{I}\right\}\nonumber\\
		=&\bigcap_{k=1}^{\infty}\left\{X_*\in X:  \{t\in\mathbb{N}:X_*\in \Theta_{t,k} \}\in\mathcal{I}\right\}\nonumber\\
		=&\bigcap_{k=1}^{\infty}\left\{X_*\in\mathfrak{X^0}: \lim_{t\to\infty} \varphi(\{i\in\mathbb{N}:X_*\in \Theta_{i,k} \}\setminus\{1,2,...,t\})=0 \right\}\nonumber\\
		=&\bigcap_{k=1}^{\infty}\left\{X_*\in\mathfrak{X^0}: (\forall j\in\mathbb{N})~(\exists m\in\mathbb{N}): \varphi(\{i\in\mathbb{N}:X_*\in \Theta_{i,k} \}\setminus\{1,2,...,t\})\leq\frac{1}{j}~(\forall t\geq m) \right\}\nonumber\\
		=&\bigcap_{k=1}^{\infty}\bigcap_{j=1}^{\infty}\bigcup_{m=1}^{\infty}\bigcap_{t=m}^{\infty} \left\{X_*\in\mathfrak{X^0}:  \varphi(\{i\in\mathbb{N}:X_*\in \Theta_{i,k} \}\setminus\{1,2,...,t\})\leq\frac{1}{j}\right\}
	\end{align}
	For every fixed $t,j\in\mathbb{N},$ we consider the family  $$\mathscr{F}_{t,j}=\{F\subseteq \mathbb{N}\setminus\{1,2,..,t\}: \varphi{(F)}>\frac{1}{j}\}.$$
	Observe that for each \( F \in \mathscr{F}_{t,j} \), we have \( \varphi(F) > \frac{1}{j} \). Since \( \varphi \) is lower semicontinuous, \( \varphi(F) = \displaystyle\lim_{i \to \infty} \varphi(F \cap [1,i]) \). Then there exists \( i_0 \in \mathbb{N} \) such that \( \varphi(F \cap [1,i_0]) > \frac{1}{j} \). Therefore, without loss of any generality, we can assume that each \( F \in \mathscr{F}_{t,j} \) is finite.
	Then, notice that 
	\begin{align*}
		\mathfrak{A}_{k,j,t}=&\left\{X_*\in\mathfrak{X^0}:  \varphi(\{i\in\mathbb{N}:X_*\in \Theta_{i,k} \}\setminus\{1,2,...,t\})\leq\frac{1}{j}\right\}\\
		=&\left\{X_*\in\mathfrak{X^0}: (\forall F\in \mathscr{F}_{t,j})~(\exists i\in F)~\mbox{such that}~X_*\notin \Theta_{i,k}\right\}\\
		=& \bigcap_{F\in \mathscr{F}_{t,j}} \bigcup_{i\in F} \mathfrak{X^0}\setminus \Theta_{i,k}.
	\end{align*}
	Since $\Theta_{i,k}$ is an open set for every $i,k\in\mathbb{N}$ and $F$ is finite, it follows that  $\mathfrak{A}_{k,j,t}$  is  closed in  $(\mathfrak{X^0},\rho)$. Finally, in view of Eq (\ref{eq 1a}), we deduce that $\mathcal{I}^{\mathbb{P}}$-$LIM^r\underbar{X}$  is an $F_{\sigma\delta}$ subset of $(\mathfrak{X^0},\rho)$.
\end{proof}
\section{Rough ideal limit points and rough ideal cluster points in probability}\label{secti3}

We begin this section by recalling two fundamental notions related to ideals on $\mathbb{N}$, namely 
$\mathcal{I}$-limit points and 
$\mathcal{I}$-cluster points, which are essential tools for our probabilistic extension of these ideas.
\begin{definition}\label{defcluster}\cite[Definition 4.1]{kostyrko2000convergence}
	Suppose $\{x_n\}_{n\in\mathbb{N}}$ is a sequence in a metric space $(U,d)$.
	\begin{itemize}
		\item[(i)] An element $y\in U$ is called an $\mathcal{I}$-limit point of $\{x_n\}_{n\in\mathbb{N}}$ if there exists $A\notin \mathcal{I}$ such that $\displaystyle\lim_{n\in A}x_n=y$.\\
		\item[(ii)] An element $y\in U$ is called an $\mathcal{I}$-cluster point of $\{x_n\}_{n\in\mathbb{N}}$, if  $$\{n\in\mathbb{N}:d(x_n,y)<\varepsilon\}\notin\mathcal{I}~\mbox{ for each }~ \varepsilon>0.$$
	\end{itemize}
\end{definition} 
\begin{remark}

Let us consider a probabilistic setting, as described in Remark \ref{impremark}, in which $\mathbb{P}(Y = y) = 1$ and $\mathbb{P}(X_n = x_n) = 1$ for each $n \in \mathbb{N}$. Then, observe that 
\begin{itemize}
\item[(a)] Within the framework of $\mathcal{I}$-limit points, it follows from Definition~\ref{defcluster}(i) that  
\begin{equation}\label{eqnlimitprobability}
	\lim_{n \in A} \mathbb{P}(d(X_n, Y) < \varepsilon) = 1 \quad \text{for some } A \notin \mathcal{I}.
\end{equation}
Note that Equation~(\ref{eqnlimitprobability}) implies that any random variable $Y \in \mathfrak{X}$ can be viewed as an $\mathcal{I}$-limit point in probability of the sequence $\{X_n\}_{n \in \mathbb{N}}$ in $\mathfrak{X}$, provided there exists a set $A \notin \mathcal{I}$ such that the subsequence $\{X_n\}_{n \in A}$ converges to $Y$ in probability.
\item[(b)] Subsequently, in the context of
$\mathcal{I}$-cluster points, we have from Definition \ref{defcluster}(ii)  that 
\begin{align}\label{eqnclusterprobability}
	&\{n \in \mathbb{N} : \mathbb{P}(d(X_n, Y) < \varepsilon) = 1\} \notin \mathcal{I}\nonumber\\
	\Rightarrow ~&\{n \in \mathbb{N} : \mathbb{P}(d(X_n, Y) < \varepsilon)> 1-\delta\} \notin \mathcal{I}~\mbox{ for every }~\delta>0. 
\end{align}
Observe that Eq (\ref{eqnclusterprobability}) suggests that any random variable $Y \in \mathfrak{X}$  can be regarded as an $\mathcal{I}$-cluster point in probability of the sequence $\{X_n\}_{n\in\mathbb{N}}$ in $\mathfrak{X}$, if for each $\varepsilon>0$, there exists a set $B \subseteq \mathbb{N}$ with $B \notin \mathcal{I}$ such that, for each $n \in B$, the probability of the event 
$$
\Omega_Y(\varepsilon):= \{\omega \in S : d(X_n(\omega), Y(\omega)) < \varepsilon\}
$$
is arbitrarily close to 1. 
\end{itemize}

These two observations naturally lead  us to define the notions of $\mathcal{I}$-limit points and $\mathcal{I}$-cluster points in probability for arbitrary sequences of random variables. Although our intuition based on convergence in probability might lead us to expect that the above two concepts are equivalent, this is not necessarily the case (see Example \ref{notclosed}).

Nevertheless, the formulation presented in Eq (\ref{eqnclusterprobability}) does not fully characterize all forms of $\mathcal{I}$-cluster in probability. In particular, it excludes cases when $a_Y <\mathbb{P}(\Omega_Y(\varepsilon))<b_Y$, with $0<a_Y,b_Y<1$ which depend solely on $Y$. For instance, consider the sequence $\{X_n\}_{n\in\mathbb{N}}$ in $\mathfrak{X}(\mathbb{R})$ such that 
$$\mathbb{P}(X_n=0)=\mathbb{P}(X_n=n)~\mbox{ for each }~ n\in\mathbb{N}.$$
Let us pick any $Y\in\mathfrak{X}(\mathbb{R})$ with $P(Y=0)=\frac{1}{2}=P(Y=1)$. Further, assume that for each $n\in\mathbb{N}$, the joint probabilities satisfy $$\mathbb{P}(X_n=0,Y=0)=\mathbb{P}(X_n=n,Y=0)=\mathbb{P}(X_n=0,Y=1)=\mathbb{P}(X_n=n,Y=1).$$
Then, for any admissible ideal $\mathcal{I}$, it follows that 
$$
\mathbb{N}\setminus\{\mbox{finite set\}}=\{n \in \mathbb{N} : \mathbb{P}(|X_n-Y| < \varepsilon) = \frac{1}{4}\} \notin \mathcal{I}~\mbox{ for every }~0<\varepsilon<1.
$$

\end{remark}
This limitation motivates the introduction of two refined notions of $\mathcal{I}$-cluster points in probability. Now we are in a position to define the notions of rough $\mathcal{I}$-limit points and rough $\mathcal{I}$-cluster points in probability for sequences of random variables. 
	
\begin{definition}\label{cluster-limit}
Let $r\geq 0$, $\mathcal{I}$ be an ideal on $\mathbb{N}$, and $\underline{X}=\{X_n\}_{n\in\mathbb{N}}$ be a sequence of random variables in  $\mathfrak{X}$.
\begin{itemize}
\item[(a)] A random variable $Y\in\mathfrak{X}$ is called a rough $\mathcal{I}$-limit point  of $\underline{X}$ in probability with roughness degree $r$ (briefly, $r-\mathcal{I}^{\mathbb{P}}$ limit point) if there exists  $A\notin \mathcal{I}$ such that 
$$
\displaystyle\lim_{n\in A}\mathbb{P}(d(X_n,Y)\geq r+\varepsilon)=0~\mbox{ for every }~\varepsilon>0.
$$
We denote $\Lambda^{r}_{\underline{X}}(\mathcal{I}^{\mathbb{P}})$ as the set of  $r-\mathcal{I}^{\mathbb{P}}$ limit points  of $\underline{X}$ in probability.

\item[(b1)]  A random variable $Y\in\mathfrak{X}$ is called a strong rough $\mathcal{I}$-cluster point of $\underline{X}$ in probability with roughness degree $r$ (briefly, $r^s-\mathcal{I}^{\mathbb{P}}$ cluster point), provided that
$$\{n\in\mathbb{N}:\mathbb{P}(d(X_n,Y)<r+\varepsilon)>1-\delta\}\notin\mathcal{I}~\mbox{ for every }~\varepsilon,\delta>0.
$$ 
We will denote $\Gamma^{r^s}_{\underline{X}}(\mathcal{I}^{\mathbb{P}})$ as the set of $r^s-\mathcal{I}^{\mathbb{P}}$ cluster points  of $\underline{X}$ in probability.
\item[(b2)] A random variable $Y\in\mathfrak{X}$ is called a weak rough $\mathcal{I}$-cluster point of $\underline{X}$ in probability with roughness degree $r$ (briefly, $r^w-\mathcal{I}^{\mathbb{P}}$ cluster point), if there exists a fixed $\delta_{*}=\delta_{*}(Y)>0$ such that $$
\{n\in\mathbb{N}:\mathbb{P}(d(X_n,Y)<r+\varepsilon)>\delta_*\}\notin\mathcal{I}~\mbox{ for every }~\varepsilon>0.
$$ The positive real number $\delta_{*}(Y)$ represents $r^w-\mathcal{I}^{\mathbb{P}}$ cluster point constant connected to $Y$ w.r.t. the sequence $\underline{X}$. We will denote $\Gamma^{r^w}_{\underline{X}}(\mathcal{I}^{\mathbb{P}})$ as the set of $r^w-\mathcal{I}^{\mathbb{P}}$ cluster points  of $\underline{X}$ in probability. 
\end{itemize}
\end{definition}	
Note that for each admissible ideal $\mathcal{I}$, the following inclusions hold:
$$
\Lambda^{r}_{\underline{X}}(\mathcal{I}^{\mathbb{P}})\subseteq \Gamma^{r^s}_{\underline{X}}(\mathcal{I}^{\mathbb{P}})\subseteq \Gamma^{r^w}_{\underline{X}}(\mathcal{I}^{\mathbb{P}}).
$$
Indeed, these inclusions can be proper, as demonstrated by the subsequent examples which substantiate this claim. 
\begin{example}\label{notclosed}
	Consider the admissible ideal $\mathcal{I}=\mathcal{I}_\delta$.  For each $j\in\mathbb{N}$, we set $A_j=\{2^{j-1}(2k+1):k\in\mathbb{N}\}$. Note that $\{A_j\}_{j\in\mathbb{N}}$ forms a partition of $\mathbb{N}$ and $A_j\notin\mathcal{I}$ for any $j\in\mathbb{N}$ since $\delta(A_j)=\frac{1}{2^j}>0$. Now let us define $\underline{X}=\{X_n\}_{n\in\mathbb{N}}$ in $\mathfrak{X}(\mathbb{R})$ as follows:
	\begin{align*}
		P(X_{n}=\frac{1}{j})=1-\frac{1}{n^2}~\mbox{ and }~ P(X_{n}=\frac{1}{j+1})=\frac{1}{n^2}~\mbox{ if }~n\in A_j.
	\end{align*}
Let $\{Y_n\}_{n\in\mathbb{N}}\in\mathfrak{X}(\mathbb{R})$ such that $\mathbb{P}(Y_j=\frac{1}{j})=1$. 
Observe that, for each $\varepsilon>0$, 
$$
\mathbb{P}(|X_n-Y_j|>\varepsilon)\leq\mathbb{P}(X_n\neq \frac{1}{j})=\frac{1}{n^2}~\mbox{ if }~n\in A_j.
$$
This ensures that $Y_j\in \Lambda^{0}_{\underline{X}}(\mathcal{I}^{\mathbb{P}})$ for each $j\in\mathbb{N}$. Note that $Y_j\in \Gamma^{0^s}_{\underline{X}}(\mathcal{I}^{\mathbb{P}})$ for each $j\in\mathbb{N}$ and $Y_j\xrightarrow{\rho}Z$ where $\mathbb{P}(Z=0)=1$. Since $\Gamma^{0^s}_{\underline{X}}(\mathcal{I}^{\mathbb{P}})$ is closed in $(\mathfrak{X^0(\mathbb{R})},\rho)$ (see Proposition \ref{strong closed}), we have $Z\in \Gamma^{0^s}_{\underline{X}}(\mathcal{I}^{\mathbb{P}})$. 

We now aim to show that $Z\notin \Lambda^{0}_{\underline{X}}(\mathcal{I}^{\mathbb{P}})$. Let $j\in\mathbb{N}$ be arbitrary. Assume, on the contrary, that there exists $A\notin\mathcal{I}$ such that $\displaystyle\lim_{n\in A}\mathbb{P}(|X_n-Z|\geq\frac{1}{j})=0$. Observe that
\begin{align*}
A&= \{k\in A:\mathbb{P}(|X_k-Z|\geq\frac{1}{j})=1\}\cup \{k\in A:\mathbb{P}(|X_k-Z|<\frac{1}{j})=1\}\\
&\subseteq \{k\in A:\mathbb{P}(|X_k-Z|\geq\frac{1}{j})=1\}\cup \{k\in \mathbb{N}:\mathbb{P}(|X_k-Z|<\frac{1}{j})=1\}\\
	&=\{\mbox{finite set}\}\cup\bigcup_{n=j+1}^{\infty}A_n.
\end{align*}	
Subsequently, we obtain
$$
0\leq\bar{\delta}(A)\leq \displaystyle\sum_{n=j+1}^{\infty}\bar{\delta}(A_n)\leq \frac{1}{2^j}<\frac{1}{j}.
$$

Since $j\in\mathbb{N}$ was chosen arbitrarily, we have $\delta(A)=0$, i.e., $A\in\mathcal{I}$ - which is a contradiction. Thus we can conclude that $Z\in\Gamma^{0^s}_{\underline{X}}(\mathcal{I}^{\mathbb{P}})\setminus\Lambda^{0}_{\underline{X}}(\mathcal{I}^{\mathbb{P}})$.
\end{example}
\begin{note}
 Example \ref{notclosed} also ensures that, for a sequence $\underline{X}$ in $\mathfrak{X}$, in general, the set $\Lambda^{r}_{\underline{X}}(\mathcal{I}^{\mathbb{P}})$ is not closed in  $(\mathfrak{X^0},\rho)$. 
\end{note}
It is a straightforward consequence that, when each element of the sequence $\underline{X}$ 
has a degenerate (one-point) distribution, one always has the equality $\Gamma^{r^s}_{\underline{X}}(\mathcal{I}^{\mathbb{P}})= \Gamma^{r^w}_{\underline{X}}(\mathcal{I}^{\mathbb{P}})$. We now present an example in which this equality does not hold.
\begin{example}
First, consider an admissible ideal $\mathcal{I}$ on $\mathbb{N}$ and then fix an $A\subset\mathbb{N}$ such that $A\notin\mathcal{I}$. Next, we define a sequence $\underline{X}=\{X_n\}_{n\in\mathbb{N}}$ in $\mathfrak{X}(\mathbb{R})$ as follows:
\begin{align*}
	X_{n}\in
	\begin{cases}
		\{-2,1\}~\mbox{ with }~ \mathbb{P}(X_n=-2)=\frac{n^2-1}{2n^2}~\mbox{ and }~\mathbb{P}(X_n=1)=\frac{n^2+1}{2n^2}~&\mbox{ if }~n\in A, \\
		\{-1,n^2\}~\mbox{ with }~ \mathbb{P}(X_n=-1)=\frac{1}{n}~\mbox{ and }~ \mathbb{P}(X_n=n^2)=1-\frac{1}{n}~&\mbox{ if }~n\notin A.
	\end{cases}
\end{align*}
Let $0<\varepsilon<1$ be arbitrary, $r=1$  and $Y\sim Bernoulli(\frac{1}{2})$, i.e., $\mathbb{P}(Y=0)=\mathbb{P}(Y=1)$. Then,  observe that
\begin{align*}
	\lim_{n\in A}\mathbb{P}(|X_n-Y|<r+\varepsilon)=\lim_{n\in A}\mathbb{P}(Y-r-\varepsilon<X_n<Y+r+\varepsilon)=\frac{1}{2}.
\end{align*}
Thus, we have 
$$
A\subset^*\{n\in\mathbb{N}:\mathbb{P}(|X_n-Y|<r+\varepsilon)=\frac{1}{2}\}.
$$
As a consequence, we get that $\{n\in\mathbb{N}:\mathbb{P}(|X_n-Y|<r+\varepsilon)=\frac{1}{2}\}\notin \mathcal{I}$. Note also that 
$\displaystyle\lim_{n\in\mathbb{N}\setminus A}\mathbb{P}(|X_n-Y|<r+\varepsilon)=0$.
Since $\mathcal{I}$ is admissible, we obtain that 
$$
\{n\in\mathbb{N}:\mathbb{P}(|X_n-Y|<r+\varepsilon)>1-\delta\}\in\mathcal{I}~\mbox{ whenever }~0<\delta<\frac{1}{3}.
$$
 This ensures that $Y\in \Gamma^{r^w}_{\underline{X}}(\mathcal{I}^{\mathbb{P}})\setminus \Gamma^{r^s}_{\underline{X}}(\mathcal{I}^{\mathbb{P}})$. 
\end{example}
\begin{definition}
Two sequences of random variables $\underline{X}$ and $\underline{Y}$ are said to be $\mathcal{I}$-almost surely (briefly, $\mathcal{I}$ a.s.) if $\{n\in\mathbb{N}:\mathbb{P}(X_n=Y_n)=1\}\in \mathcal{F}(\mathcal{I})$.
\end{definition}
The following result guarantees that if $\underline{X}$ and $\underline{Y}$ are $\mathcal{I}$ a.s. then their respective rough $\mathcal{I}$-limit point sets and rough $\mathcal{I}$-cluster point sets coincide.
\footnotetext{For each $A\subseteq\mathbb{N}$, $\bar{\delta}(A)=\displaystyle\limsup_{n\to\infty}\frac{|A\cap\{1,2,...,n\}|}{n}$ is the upper natural density of $A$.}
\begin{proposition}
	If $\underline{X}$ and $\underline{Y}$ are $\mathcal{I}$ a.s. sequences in $\mathfrak{X}$ then $\Lambda^{r}_{\underline{X}}(\mathcal{I}^{\mathbb{P}})=\Lambda^{r}_{\underline{Y}}(\mathcal{I}^{\mathbb{P}})$,  $\Gamma^{r^s}_{\underline{X}}(\mathcal{I}^{\mathbb{P}})=\Gamma^{r^s}_{\underline{Y}}(\mathcal{I}^{\mathbb{P}})$,  and $\Gamma^{r^w}_{\underline{X}}(\mathcal{I}^{\mathbb{P}})=\Gamma^{r^w}_{\underline{Y}}(\mathcal{I}^{\mathbb{P}})$ for every $r\geq 0$.
\end{proposition}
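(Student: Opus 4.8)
The plan is to reduce everything to two elementary observations and then exploit the symmetry of the hypothesis. Write $M=\{n\in\mathbb{N}:\mathbb{P}(X_n=Y_n)=1\}$, so that the hypothesis reads $M\in\mathcal{F}(\mathcal{I})$, equivalently $\mathbb{N}\setminus M\in\mathcal{I}$. Since the relation ``$\mathcal{I}$ a.s.'' is symmetric in $\underline{X}$ and $\underline{Y}$, in each of the three equalities it suffices to prove one inclusion; the reverse then follows by interchanging the roles of $\underline{X}$ and $\underline{Y}$.

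The first observation is measure-theoretic. For $n\in M$ we have $X_n=Y_n$ almost surely, and since $d$ is measurable (this is precisely where separability of $(U,d)$ is used, as noted in the introduction), it follows that $d(X_n,Z)=d(Y_n,Z)$ almost surely for every $Z\in\mathfrak{X}$. Consequently $d(X_n,Z)$ and $d(Y_n,Z)$ have the same distribution, so that $\mathbb{P}(d(X_n,Z)\geq t)=\mathbb{P}(d(Y_n,Z)\geq t)$ and $\mathbb{P}(d(X_n,Z)<t)=\mathbb{P}(d(Y_n,Z)<t)$ for every threshold $t$ and every $n\in M$. Thus, for any fixed values of $r,\varepsilon,\delta$, each defining index set for $\underline{X}$ coincides with the corresponding one for $\underline{Y}$ after intersection with $M$.

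The second observation is purely ideal-theoretic: if $E\notin\mathcal{I}$ then $E\cap M\notin\mathcal{I}$. Indeed $E\setminus M\subseteq\mathbb{N}\setminus M\in\mathcal{I}$, so if $E\cap M$ belonged to $\mathcal{I}$ then $E=(E\cap M)\cup(E\setminus M)$ would lie in $\mathcal{I}$, a contradiction.

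With these in hand the three cases are routine. For the limit points, if $Z\in\Lambda^{r}_{\underline{X}}(\mathcal{I}^{\mathbb{P}})$ via a witness $A\notin\mathcal{I}$, I would replace $A$ by $A\cap M$, which remains outside $\mathcal{I}$ by the second observation (hence infinite, $\mathcal{I}$ being admissible); along $A\cap M$ the quantities $\mathbb{P}(d(Y_n,Z)\geq r+\varepsilon)$ coincide with $\mathbb{P}(d(X_n,Z)\geq r+\varepsilon)$ and therefore still tend to $0$, giving $Z\in\Lambda^{r}_{\underline{Y}}(\mathcal{I}^{\mathbb{P}})$. For the strong and weak cluster points, I would intersect the relevant defining set $\{n\in\mathbb{N}:\mathbb{P}(d(X_n,Z)<r+\varepsilon)>1-\delta\}$ (respectively $\{n\in\mathbb{N}:\mathbb{P}(d(X_n,Z)<r+\varepsilon)>\delta_*\}$) with $M$; by the first observation this intersection equals the corresponding $\underline{Y}$-set intersected with $M$, and by the second observation it is not in $\mathcal{I}$, whence neither is the full $\underline{Y}$-set. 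In the weak case the same cluster constant $\delta_*(Z)$ serves both sequences. I do not anticipate any genuine obstacle; the only point requiring care is the transfer of almost-sure equality of $X_n$ and $Y_n$ to equality of the laws of $d(X_n,Z)$ and $d(Y_n,Z)$, which rests on the measurability of $d$ already invoked above.
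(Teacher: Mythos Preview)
Your proof is correct and follows the same overall skeleton as the paper's: intersect the witnessing index set with $M=\{n:\mathbb{P}(X_n=Y_n)=1\}$, observe this intersection is still outside $\mathcal{I}$ (because $\mathbb{N}\setminus M\in\mathcal{I}$), transfer the probability conditions from $\underline{X}$ to $\underline{Y}$ on that intersection, and conclude by symmetry. The one difference is in the transfer step: the paper uses the triangle inequality with an $\varepsilon/2$ splitting, obtaining for $n\in M$ the inequality $\mathbb{P}(d(Y_n,\xi)\geq r+\varepsilon)\leq \mathbb{P}(d(X_n,\xi)\geq r+\tfrac{\varepsilon}{2})+\mathbb{P}(X_n\neq Y_n)=\mathbb{P}(d(X_n,\xi)\geq r+\tfrac{\varepsilon}{2})$, whereas you go directly to the stronger statement that $d(X_n,Z)$ and $d(Y_n,Z)$ have the \emph{same} law on $M$, so the defining probabilities are literally equal there. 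Your route is a touch cleaner since it avoids the $\varepsilon/2$ bookkeeping, and it also makes transparent why the same weak cluster constant $\delta_*(Z)$ works for both sequences; the paper's triangle-inequality argument still works but is slightly more roundabout.
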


\begin{proof}
Assume that $\{n\in\mathbb{N}:\mathbb{P}(X_n=Y_n)<1\}\in \mathcal{I}$ and pick a $\xi\in\Lambda^r_{\underline{X}}(\mathcal{I}^{\mathbb{P}})$. Let $\varepsilon>0$ be arbitrary. Then there exists $A\notin \mathcal{I}$ such that
$$
\displaystyle\lim_{n\in A}\mathbb{P}(d(X_n,\xi)\geq r+\frac{\varepsilon}{2})=0.
$$  
Observe that $$\{n\in A:\mathbb{P}(X_n=Y_n)<1\}\subseteq \{n\in\mathbb{N}:\mathbb{P}(X_n=Y_n)<1\}.$$
Consequently, $A'=\{n\in A:\mathbb{P}(X_n=Y_n)=1\}\notin \mathcal{I}$ as $A\notin \mathcal{I}$.
 We intend to show that $\displaystyle\lim_{n\in A'}\mathbb{P}(d(Y_n,\xi)\geq r+\frac{\varepsilon}{2})=0$. This follows from the inequality that for each $n\in A'$, 
\begin{align*}
\mathbb{P}(d(Y_n,\xi)\geq r+\varepsilon)&\leq \mathbb{P}(d(X_n,\xi)\geq r+\frac{\varepsilon}{2})+\mathbb{P}(d(Y_n,X_n)\geq
\frac{\varepsilon}{2})\\
&=\mathbb{P}(d(X_n,\xi)\geq r+\frac{\varepsilon}{2})+\mathbb{P}(X_n\neq Y_n)=\mathbb{P}(d(X_n,\xi)\geq r+\frac{\varepsilon}{2}).
\end{align*}	
This ensures that $\Lambda^r_{\underline{X}}(\mathcal{I}^{\mathbb{P}})\subseteq\Lambda^r_{\underline{Y}}(\mathcal{I}^{\mathbb{P}})$. Conversely, by interchanging $\underline{X}$ and $\underline{Y}$, we derive $\Lambda^r_{\underline{Y}}(\mathcal{I}^{\mathbb{P}})\subseteq\Lambda^r_{\underline{X}}(\mathcal{I}^{\mathbb{P}})$. As a consequence, we obtain $\Lambda^r_{\underline{X}}(\mathcal{I}^{\mathbb{P}})=\Lambda^r_{\underline{Y}}(\mathcal{I}^{\mathbb{P}})$.\\

Next, suppose that $\zeta\in\Gamma^{r^s}_{\underline{X}}(\mathcal{I}^{\mathbb{P}})$. Then for every $\varepsilon,\delta>0$, we have  
$$
B=\{n\in\mathbb{N}:\mathbb{P}(d(X_n,\zeta)<r+\frac{\varepsilon}{2})>1-\delta\}\notin \mathcal{I}.
$$ Note that  $B'=\{n\in B:\mathbb{P}(X_n=Y_n)=1\}\notin \mathcal{I}$. Then, for all $n\in B'$, we will obtain  
$$
\mathbb{P}(d(Y_n,\zeta)>r+\varepsilon)\leq \mathbb{P}(d(X_n,\zeta)>r+\frac{\varepsilon}{2}).
$$ 
Subsequently,
$$
\{n\in B':\mathbb{P}(d(X_n,\zeta)<r+\frac{\varepsilon}{2})>1-\delta\}\subseteq \{n\in B':\mathbb{P}(d(Y_n,\zeta)<r+\varepsilon)>1-\delta\}.
$$
Observe that 
$\{n\in B':\mathbb{P}(d(X_n,\zeta)<r+\frac{\varepsilon}{2})>1-\delta\}\notin \mathcal{I}$
since $B\setminus B'\in\mathcal{I}$. Therefore, 
we obtain  $\{n\in\mathbb{N}:\mathbb{P}(d(Y_n,\zeta)<r+\varepsilon)>1-\delta\}\notin \mathcal{I}$, i.e., $\zeta\in\Gamma_{\underline{Y}}(\mathcal{I}^{\mathbb{P}})$. As a consequence, $\Gamma_{\underline{X}}(\mathcal{I}^{\mathbb{P}})\subseteq\Gamma_{\underline{Y}}(\mathcal{I}^{\mathbb{P}})$. Finally, by symmetry, we  conclude that $\Gamma^{r^s}_{\underline{X}}(\mathcal{I}^{\mathbb{P}})=\Gamma^{r^s}_{\underline{Y}}(\mathcal{I}^{\mathbb{P}})$.

The claim that $\Gamma^{r^w}_{\underline{X}}(\mathcal{I}^{\mathbb{P}})=\Gamma^{r^w}_{\underline{Y}}(\mathcal{I}^{\mathbb{P}})$ can be demonstrated using a similar argument.
\end{proof}
The subsequent result, under specific conditions, ensures the non-voidness of the set $\Gamma^{r^s}_{\underline{X}}(\mathcal{I}^{\mathbb{P}})$ for all positive values of $r$. Before we proceed, let us present an important lemma.
\begin{lemma}\label{lemma 2.10}
	For $Y\in \mathfrak{X}$, let $\varepsilon_Y,\delta_Y>0$ be such that
$\{n\in\mathbb{N}:\mathbb{P}(d(X_n,Y)<\varepsilon_Y)>\delta_{Y}\}\in\mathcal{I}$, where $\underline{X}$ is a sequence in $\mathfrak{X}$. Then $\{n\in\mathbb{N}:\mathbb{P}(d(X_n,Y)<\varepsilon_*)>\delta_{*}\}\in\mathcal{I}$ for every $0<\varepsilon_*\leq\varepsilon_Y$ and $\delta_Y\leq\delta_*$.
\end{lemma}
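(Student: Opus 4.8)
The plan is to establish the result through a purely set-theoretic inclusion argument, exploiting the downward closure property of the ideal $\mathcal{I}$. I would denote the hypothesized set by $A=\{n\in\mathbb{N}:\mathbb{P}(d(X_n,Y)<\varepsilon_Y)>\delta_Y\}$, which lies in $\mathcal{I}$ by assumption, and the target set by $B=\{n\in\mathbb{N}:\mathbb{P}(d(X_n,Y)<\varepsilon_*)>\delta_*\}$. Since every ideal is closed under taking subsets, it suffices to verify the single inclusion $B\subseteq A$; the conclusion $B\in\mathcal{I}$ then follows immediately.

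To prove $B\subseteq A$, I would fix an arbitrary $n\in B$ and track how the two parameter changes affect the relevant probabilities. The inequality $\varepsilon_*\leq\varepsilon_Y$ yields the event inclusion $\{d(X_n,Y)<\varepsilon_*\}\subseteq\{d(X_n,Y)<\varepsilon_Y\}$, whence the monotonicity of $\mathbb{P}$ gives $\mathbb{P}(d(X_n,Y)<\varepsilon_Y)\geq\mathbb{P}(d(X_n,Y)<\varepsilon_*)$. Combining this with the defining property of $n\in B$ and the hypothesis $\delta_Y\leq\delta_*$ produces the chain $\mathbb{P}(d(X_n,Y)<\varepsilon_Y)\geq\mathbb{P}(d(X_n,Y)<\varepsilon_*)>\delta_*\geq\delta_Y$, so that $\mathbb{P}(d(X_n,Y)<\varepsilon_Y)>\delta_Y$, i.e., $n\in A$.

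Since $n\in B$ was arbitrary, this establishes $B\subseteq A$, and the downward closure of $\mathcal{I}$ completes the argument. I do not expect any genuine obstacle here: both parameter shifts push in the same direction, so no case analysis or delicate estimate is needed. The only point deserving a moment's care is the consistency of the two monotonicities, namely that shrinking $\varepsilon$ decreases the probability while enlarging $\delta$ raises the threshold; one must simply confirm that these combine to yield a subset (rather than a superset) relation, which is exactly what the displayed chain of inequalities records.
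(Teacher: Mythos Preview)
Your proposal is correct and follows exactly the same route as the paper: the paper's proof is the single sentence that the result follows from the downward closure of $\mathcal{I}$, and you have simply spelled out the inclusion $B\subseteq A$ that underlies this. There is nothing to add.
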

\begin{proof}
The proof follows from the fact that $A\in\mathcal{I}$ for all $A\subseteq B$ whenever $B\in\mathcal{I}$.
\end{proof}

\begin{proposition}\label{pro2.11}
	Let $\mathcal{I}$ be an ideal on $\mathbb{N}$ and $\underline{X}$ be a sequence of elements from $\mathfrak{X}$.	If $\mathfrak{B}$ is a compact set in $(\mathfrak{X}^0, \rho)$ such that $\{n \in \mathbb{N} : X_n \in \mathfrak{B}\} \notin \mathcal{I}$, then $\Gamma^{r^s}_{\underline{X}}(\mathcal{I}^{\mathbb{P}}) \neq \varnothing$ for any $r >0$.
\end{proposition}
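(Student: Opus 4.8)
The plan is to argue by contradiction, using the compactness of $\mathfrak{B}$ together with the attainment of the infimum in the definition of $\rho$. Suppose, to the contrary, that $\Gamma^{r^s}_{\underline{X}}(\mathcal{I}^{\mathbb{P}})=\varnothing$ for some fixed $r>0$. Then, in particular, no $Y\in\mathfrak{B}$ is a strong rough $\mathcal{I}$-cluster point of $\underline{X}$ in probability, so for each $Y\in\mathfrak{B}$ the negation of Definition~\ref{cluster-limit}(b1) supplies constants $\varepsilon_Y,\delta_Y>0$ with
\begin{equation*}
E_Y:=\{n\in\mathbb{N}:\mathbb{P}(d(X_n,Y)<r+\varepsilon_Y)>1-\delta_Y\}\in\mathcal{I}.
\end{equation*}
My goal is to turn this pointwise failure into a global obstruction via a finite subcover.

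The key step, which I expect to carry the real content, is a geometric conversion showing that $\rho$-closeness to $Y$ already forces membership in $E_Y$. I would set $\eta_Y:=\min\{\varepsilon_Y,\delta_Y\}>0$ and consider the open $\rho$-ball $B_\rho(Y,\eta_Y)$ in $(\mathfrak{X}^0,\rho)$, and then claim that $X_n\in B_\rho(Y,\eta_Y)$ implies $n\in E_Y$. Indeed, if $\rho(X_n,Y)<\eta_Y$ then, since the infimum defining $\rho(X_n,Y)$ is attained, one gets $\mathbb{P}(d(X_n,Y)>\eta_Y)\le\rho(X_n,Y)<\eta_Y$, whence $\mathbb{P}(d(X_n,Y)\le\eta_Y)>1-\eta_Y\ge 1-\delta_Y$. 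Because $r>0$ we have $\eta_Y\le\varepsilon_Y<r+\varepsilon_Y$, so $\{d(X_n,Y)\le\eta_Y\}\subseteq\{d(X_n,Y)<r+\varepsilon_Y\}$, and therefore $\mathbb{P}(d(X_n,Y)<r+\varepsilon_Y)>1-\delta_Y$, i.e. $n\in E_Y$. Consequently $\{n\in\mathbb{N}:X_n\in B_\rho(Y,\eta_Y)\}\subseteq E_Y$, and since $\mathcal{I}$ is hereditary this preimage lies in $\mathcal{I}$.

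Finally I would invoke compactness to close the argument. The family $\{B_\rho(Y,\eta_Y):Y\in\mathfrak{B}\}$ is an open cover of the compact set $\mathfrak{B}$, so it admits a finite subcover $\mathfrak{B}\subseteq\bigcup_{i=1}^{k}B_\rho(Y_i,\eta_{Y_i})$. Then
\begin{equation*}
\{n\in\mathbb{N}:X_n\in\mathfrak{B}\}\subseteq\bigcup_{i=1}^{k}\{n\in\mathbb{N}:X_n\in B_\rho(Y_i,\eta_{Y_i})\}\subseteq\bigcup_{i=1}^{k}E_{Y_i}.
\end{equation*}
A finite union of members of $\mathcal{I}$ again belongs to $\mathcal{I}$, and $\mathcal{I}$ is closed under subsets, so $\{n\in\mathbb{N}:X_n\in\mathfrak{B}\}\in\mathcal{I}$, contradicting the hypothesis $\{n\in\mathbb{N}:X_n\in\mathfrak{B}\}\notin\mathcal{I}$. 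Hence $\Gamma^{r^s}_{\underline{X}}(\mathcal{I}^{\mathbb{P}})\neq\varnothing$ for every $r>0$. The one delicate point is the passage from the ball inequality $\rho(X_n,Y)<\eta_Y$ to the probabilistic inequality defining $E_Y$; this is precisely where the attainment of the infimum in $\rho$ and the positivity of $r$ are used, and where I would verify the strict-versus-weak inequalities carefully.
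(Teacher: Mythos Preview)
Your argument is correct and follows essentially the same route as the paper: assume $\Gamma^{r^s}_{\underline{X}}(\mathcal{I}^{\mathbb{P}})=\varnothing$, extract witnesses $\varepsilon_Y,\delta_Y$ for each $Y\in\mathfrak{B}$, cover $\mathfrak{B}$ by $\rho$-balls, pass to a finite subcover by compactness, and use attainment of the infimum in $\rho$ to show that $X_n$ landing in one of the balls forces $n$ into the corresponding $\mathcal{I}$-set, yielding $\{n:X_n\in\mathfrak{B}\}\in\mathcal{I}$. The only cosmetic differences are that the paper takes ball radii $\delta_Y$ (after assuming WLOG $\delta_Y<r$) and then cites Lemma~\ref{lemma 2.10} to pass from $\mathbb{P}(d(X_n,Y_i)<\delta_{Y_i})>1-\delta_{Y_i}$ to membership in the $\mathcal{I}$-set, whereas you take radii $\eta_Y=\min\{\varepsilon_Y,\delta_Y\}$ and verify the inclusion $\{d(X_n,Y)\le\eta_Y\}\subseteq\{d(X_n,Y)<r+\varepsilon_Y\}$ directly; your packaging is arguably a bit cleaner.
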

\begin{proof}
	Assume, on the contrary, that $\Gamma^{r_*^s}_{\underline{X}}(\mathcal{I}^{\mathbb{P}})=\varnothing$ for some $r_*>0$. Then, for each $Y\in\mathfrak{B}$, there exist $\varepsilon_Y>0$ and $\delta_Y>0$ such that 
	\begin{equation}\label{eq*}
		\{n\in\mathbb{N}:\mathbb{P}(d(X_n,Y)<r_*+\varepsilon_Y)>1-\delta_{Y}\}\in\mathcal{I}.
	\end{equation}
We set $\varepsilon_*=\displaystyle\inf_{Y\in\mathfrak{B}}\varepsilon_Y$. Note that, without loss of generality, we can assume $0<\delta_Y<r_*$ for each $Y\in\mathfrak{B}$. It is evident that $\{B_{\rho}(Y,\delta_Y):Y\in\mathfrak{B}\}$ is an open cover for $\mathfrak{B}$, where $$B_{\rho}(X,r)=\{Y\in\mathfrak{X}^0:\rho(X,Y)<r\}.$$   
	Since $\mathfrak{B}$ is compact in $(\mathfrak{X}^0, \rho)$, there exists $m\in\mathbb{N}$ such that
	\begin{align*}
		\mathfrak{B}\subseteq\bigcup_{i=1}^{m}B_{\rho}(Y_i,\delta_{Y_i}).
	\end{align*}
	With this, we have
	\begin{equation}\label{eq3}
	\{n\in\mathbb{N}:X_n\in\mathfrak{B}\}\subseteq\bigcup_{i=1}^{m}\{n\in\mathbb{N}:X_n\in B_{\rho}(Y_i,\delta_{Y_i})\}.
	\end{equation}
	Observe that
	\begin{align}\label{eq4}
		\mbox{if}~	&X_n\in  B_{\rho}(Y_i,\delta_{Y_i})~\mbox{ for some }~i\in\mathbb{N}\nonumber\\
		\Rightarrow& ~\mathbb{P}(d(X_n,Y_i)>\alpha)\leq\alpha~\mbox{ and }~\alpha<\delta_{Y_i}, ~\mbox{ where }~\alpha=\rho(X_n,Y_i)\nonumber\\
		\Rightarrow&~\mathbb{P}(d(X_n,Y_i)\leq\alpha)>1-\alpha~\mbox{ and }~\alpha<\delta_{Y_i}\nonumber\\
		\Rightarrow&~\mathbb{P}(d(X_n,Y_i)<\delta_{Y_i})>1-\delta_{Y_i}.
	\end{align}
 Since $\delta_{Y_i}<r_*+\varepsilon_{Y_i}$ for each $i\in\mathbb{N}$, in view of Lemma \ref{lemma 2.10}, Eq (\ref{eq3}), and Eq (\ref{eq4}), we obtain
		$$
		\{n\in\mathbb{N}:X_n\in\mathfrak{B}\}\subseteq\bigcup_{i=1}^{m}\{n\in\mathbb{N}:\mathbb{P}(d(X_n,Y_i)<\delta_{Y_i})>1-\delta_{Y_i}\}\in\mathcal{I},
		$$
		$-$ which is a contradiction.
Thus we can conclude that $\Gamma^{r^s}_{\underline{X}}(\mathcal{I}^{\mathbb{P}})\neq\varnothing$ for any $r>0$.
\end{proof}
Observe that, for any $\underline{X}\in\mathfrak{X}$, we already have $\Gamma^{r^s}_{\underline{X}}(\mathcal{I}^{\mathbb{P}})\subseteq \Gamma^{r^w}_{\underline{X}}(\mathcal{I}^{\mathbb{P}})$. Consequently, $\Gamma^{r^w}_{\underline{X}}(\mathcal{I}^{\mathbb{P}})\neq\varnothing$ for any $r>0$ whenever the conditions of Proposition \ref{pro2.11} are satisfied. So
a natural question that arises is whether these conditions can be weakened while still ensuring the non-emptiness of $\Gamma^{r^w}_{\underline{X}}(\mathcal{I}^{\mathbb{P}})$. In the next result, we establish that the assumption of compactness used in Proposition \ref{pro2.11} can, in fact, be replaced by the weaker condition of total boundedness.
\begin{proposition}Let $\mathcal{I}$ be an ideal on $\mathbb{N}$ and $\underline{X}$ be a sequence with values in $\mathfrak{X}$.	If $\mathfrak{B}$ is a totally bounded set in $(\mathfrak{X}^0, \rho)$ such that $\{n \in \mathbb{N} : X_n \in \mathfrak{B}\} \notin \mathcal{I}$, then $\Gamma^{r^w}_{\underline{X}}(\mathcal{I}^{\mathbb{P}}) \neq \varnothing$ for any $r>0$.
\end{proposition}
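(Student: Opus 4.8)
The plan is to adapt the compactness argument of Proposition \ref{pro2.11} to the total boundedness setting, but with a crucial shift in strategy: instead of contradicting \emph{strong} cluster points via a finite subcover all at once, I would argue directly for the existence of a \emph{weak} cluster point by a pigeonhole-type argument on a single well-chosen finite $\varepsilon$-net. First I would assume, for contradiction, that $\Gamma^{r^w}_{\underline{X}}(\mathcal{I}^{\mathbb{P}}) = \varnothing$ for some $r_*>0$. This means that for every $Y\in\mathfrak{X}^0$ (in particular every $Y\in\mathfrak{B}$) and every $\delta>0$, the set $\{n\in\mathbb{N}:\mathbb{P}(d(X_n,Y)<r_*+\varepsilon)>\delta\}\in\mathcal{I}$ for some $\varepsilon=\varepsilon(Y,\delta)>0$ — note that negating the weak condition is far stronger than negating the strong one, since it must fail for \emph{every} threshold $\delta_*$, not just thresholds near $1$.

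The key structural step is to exploit total boundedness to produce a \emph{finite} $\eta$-net $\{Y_1,\dots,Y_m\}$ for $\mathfrak{B}$ with $\eta$ chosen smaller than $r_*$ (say $\eta < r_*/2$). The aim is to show that membership $X_n\in\mathfrak{B}$ forces a lower bound on $\mathbb{P}(d(X_n,Y_i)< r_*+\varepsilon)$ for the nearest net point $Y_i$, via the same Ky~Fan computation as in Eq (\ref{eq4}): if $\rho(X_n,Y_i)=\alpha<\eta$, then $\mathbb{P}(d(X_n,Y_i)\le\alpha)>1-\alpha>1-\eta$, whence certainly $\mathbb{P}(d(X_n,Y_i)<r_*+\varepsilon)>1-\eta$ for every $\varepsilon>0$. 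Since $\{n:X_n\in\mathfrak{B}\}\notin\mathcal{I}$ and this index set is covered by the finitely many sets $\{n:X_n\in B_\rho(Y_i,\eta)\}$, at least one of these, say for $Y_{i_0}$, lies outside $\mathcal{I}$ (an ideal cannot contain a finite union covering a set not in the ideal). For that $Y_{i_0}$ we then have $\{n:\mathbb{P}(d(X_n,Y_{i_0})<r_*+\varepsilon)>1-\eta\}\notin\mathcal{I}$ for every $\varepsilon>0$, which exhibits $Y_{i_0}$ as a weak cluster point with constant $\delta_*=1-\eta$ — contradicting our assumption.

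The main obstacle I anticipate is the handling of the $\varepsilon$-quantifier: the weak cluster condition requires a \emph{single} index set (independent of $\delta$, with $\delta_*$ fixed) to lie outside $\mathcal{I}$ for \emph{every} $\varepsilon>0$, so I must verify that the net point $Y_{i_0}$ produced above works uniformly in $\varepsilon$. Fortunately the estimate $\mathbb{P}(d(X_n,Y_{i_0})<r_*+\varepsilon)>1-\eta$ holds for all $\varepsilon>0$ simultaneously once $\rho(X_n,Y_{i_0})<\eta<r_*$, because enlarging the radius from $r_*$ to $r_*+\varepsilon$ only increases the probability; thus the same index set $\{n:X_n\in B_\rho(Y_{i_0},\eta)\}$ serves for every $\varepsilon$, and this set is exactly the one shown not to be in $\mathcal{I}$. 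The subtlety is therefore resolved by noting that, unlike the strong case, we need no infimum $\varepsilon_*$ over $\mathfrak{B}$ nor any appeal to Lemma \ref{lemma 2.10}; a fixed finite net and the monotonicity of the probability in $\varepsilon$ suffice. I would close by remarking that the weakening from compactness to total boundedness is exactly what permits dropping to a single net radius $\eta$ and a fixed weak constant $1-\eta$.
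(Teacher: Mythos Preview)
Your argument is correct, and in fact the contradiction hypothesis is never invoked: once you choose $\eta<r_*$, take a finite $\eta$-net $\{Y_1,\dots,Y_m\}$ for $\mathfrak{B}$, and pigeonhole to obtain $\{n:\rho(X_n,Y_{i_0})<\eta\}\notin\mathcal{I}$, the Ky~Fan estimate gives $\mathbb{P}(d(X_n,Y_{i_0})<r_*+\varepsilon)>1-\eta$ for every $\varepsilon>0$ on that index set, so $Y_{i_0}\in\Gamma^{r_*^w}_{\underline{X}}(\mathcal{I}^{\mathbb{P}})$ with $\delta_*=1-\eta$. This is a direct existence proof.

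The paper proceeds differently. It assumes $\Gamma^{r_*^w}_{\underline{X}}(\mathcal{I}^{\mathbb{P}})=\varnothing$, extracts for each $Y\in\mathfrak{B}$ an $\varepsilon_Y>0$ from the negated weak-cluster condition, sets $\varepsilon_*=\inf_{Y\in\mathfrak{B}}\varepsilon_Y$, invokes Lemma~\ref{lemma 2.10}, and covers $\mathfrak{B}$ by $\rho$-balls of radius $r_*+\varepsilon_*$ to force $\{n:X_n\in\mathfrak{B}\}\in\mathcal{I}$. Your route is more elementary: by fixing the net radius $\eta<r_*$ in advance you avoid the infimum $\varepsilon_*$ (whose positivity is not discussed in the paper), dispense with Lemma~\ref{lemma 2.10}, and obtain an explicit weak-cluster constant $1-\eta$. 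The paper's approach, on the other hand, more closely parallels the compactness argument of Proposition~\ref{pro2.11} and makes visible exactly which step the weak notion relaxes. Both hinge on the same Ky~Fan computation (Eq~(\ref{eq4})) and the finite-union property of ideals; the difference is whether one negates first and then covers, or covers first and then verifies the cluster condition directly.
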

\begin{proof}
	Assume, on the contrary, that $\Gamma^{r_*^w}_{\underline{X}}(\mathcal{I}^{\mathbb{P}})=\varnothing$ for some $r_*>0$. Then, for each $Y\in\mathfrak{B}$, there exists $\varepsilon_Y>0$ such that 
	\begin{equation*}
		\{n\in\mathbb{N}:\mathbb{P}(d(X_n,Y)<r_*+\varepsilon_Y)>\delta\}\in\mathcal{I}~\mbox{ for every }~\delta>0.
	\end{equation*}
	We set $\varepsilon_*=\displaystyle\inf_{Y\in\mathfrak{B}}\varepsilon_Y$.  Then, in view of Lemma \ref{lemma 2.10}, we obtain 
	
	\begin{align}\label{eq6}
		&\{n\in\mathbb{N}:\mathbb{P}(d(X_n,Y)<r_*+\varepsilon_*)>\delta\}\in\mathcal{I}~\mbox{ for every }~\delta>0.
	\end{align}
	Since $\mathfrak{B}$ is  totally bounded in $(\mathfrak{X}^0, \rho)$, there exist $Y_1,Y_2,...,Y_m\in \mathfrak{X}^0$ such that
	\begin{align*}\label{eq 3}
		&\mathfrak{B}\subseteq\bigcup_{i=1}^{m}B_{\rho}(Y_i,r_*+\varepsilon_*)\nonumber\\
		\Rightarrow~&	\{n\in\mathbb{N}:X_n\in\mathfrak{B}\}\subseteq\bigcup_{i=1}^{m}\{n\in\mathbb{N}:X_n\in B_{\rho}(Y_i,r_*+\varepsilon_*)\}\\
		\Rightarrow~&\{n\in\mathbb{N}:X_n\in\mathfrak{B}\}\subseteq\bigcup_{i=1}^{m}\{n\in\mathbb{N}:\mathbb{P}(d(X_n,Y_i)< r_*+\varepsilon_*)>1-(r_*+\varepsilon_*)\}\in\mathcal{I}~\mbox{(by Eq (\ref{eq6}))}
	\end{align*}
	$-$ which is a contradiction.
Thus we can infer that $\Gamma^{r^w}_{\underline{X}}(\mathcal{I}^{\mathbb{P}})\neq\varnothing$ for any $r>0$.
\end{proof}

The next two results concern the closedness properties of the sets $\Gamma^{r^s}_{\underline{X}}(\mathcal{I}^{\mathbb{P}})$ and $\Gamma^{r^w}_{\underline{X}}(\mathcal{I}^{\mathbb{P}})$ in the metric space $(\mathfrak{X^0},\rho)$. It turns out that $\Gamma^{r^s}_{\underline{X}}(\mathcal{I}^{\mathbb{P}})$ is always closed,  whereas $\Gamma^{r^w}_{\underline{X}}(\mathcal{I}^{\mathbb{P}})$ is only conditionally closed.

\begin{proposition}\label{strong closed}
	Suppose $r\geq 0$ and $\mathcal{I}$ is an admissible ideal on $\mathbb{N}$. Then for any $\underline{X}$ in $\mathfrak{X}$, the set  $\Gamma^{r^s}_{\underline{X}}(\mathcal{I}^{\mathbb{P}})$ is closed in $(\mathfrak{X^0},\rho)$. 
\end{proposition}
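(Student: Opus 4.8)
The plan is to verify closedness directly from the sequential characterization: I take a sequence $\{Y_k\}_{k\in\mathbb{N}}$ lying in $\Gamma^{r^s}_{\underline{X}}(\mathcal{I}^{\mathbb{P}})$ with $Y_k\xrightarrow{\rho}Y$ for some $Y\in\mathfrak{X^0}$, and show that $Y$ is itself a strong rough $\mathcal{I}$-cluster point in probability. So I fix arbitrary $\varepsilon,\delta>0$ and aim to prove that $\{n\in\mathbb{N}:\mathbb{P}(d(X_n,Y)<r+\varepsilon)>1-\delta\}\notin\mathcal{I}$. The idea is to compare the event governing $Y$ with the corresponding event for a suitably chosen $Y_k$ that is $\rho$-close to $Y$, and then transfer the ``non-membership in $\mathcal{I}$'' from $Y_k$ to $Y$ using only the hereditary property of the ideal.

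The heart of the argument is a triangle-inequality estimate carried out at the level of events. Since $\rho$ metrizes convergence in probability, I can fix an index $k$ large enough that $\mathbb{P}\bigl(d(Y_k,Y)\geq\tfrac{\varepsilon}{2}\bigr)<\tfrac{\delta}{2}$. For every $n$, the pointwise triangle inequality for $d$ gives the event inclusion $\bigl\{d(X_n,Y_k)<r+\tfrac{\varepsilon}{2}\bigr\}\cap\bigl\{d(Y_k,Y)<\tfrac{\varepsilon}{2}\bigr\}\subseteq\bigl\{d(X_n,Y)<r+\varepsilon\bigr\}$. Applying the elementary bound $\mathbb{P}(A\cap B)\geq\mathbb{P}(A)-\mathbb{P}(B^c)$ then yields
\[
\mathbb{P}\bigl(d(X_n,Y)<r+\varepsilon\bigr)\ \geq\ \mathbb{P}\bigl(d(X_n,Y_k)<r+\tfrac{\varepsilon}{2}\bigr)-\mathbb{P}\bigl(d(Y_k,Y)\geq\tfrac{\varepsilon}{2}\bigr)\ >\ \mathbb{P}\bigl(d(X_n,Y_k)<r+\tfrac{\varepsilon}{2}\bigr)-\tfrac{\delta}{2}.
\]
Consequently, whenever $\mathbb{P}\bigl(d(X_n,Y_k)<r+\tfrac{\varepsilon}{2}\bigr)>1-\tfrac{\delta}{2}$, the right-hand side forces $\mathbb{P}\bigl(d(X_n,Y)<r+\varepsilon\bigr)>1-\delta$.

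This establishes the set inclusion $\bigl\{n\in\mathbb{N}:\mathbb{P}(d(X_n,Y_k)<r+\tfrac{\varepsilon}{2})>1-\tfrac{\delta}{2}\bigr\}\subseteq\bigl\{n\in\mathbb{N}:\mathbb{P}(d(X_n,Y)<r+\varepsilon)>1-\delta\bigr\}$. Because $Y_k\in\Gamma^{r^s}_{\underline{X}}(\mathcal{I}^{\mathbb{P}})$, the left-hand set is not in $\mathcal{I}$; and since $\mathcal{I}$ is closed under subsets, any superset of a set lying outside $\mathcal{I}$ must also lie outside $\mathcal{I}$. Hence the right-hand set is not in $\mathcal{I}$. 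As $\varepsilon,\delta>0$ were arbitrary, we conclude $Y\in\Gamma^{r^s}_{\underline{X}}(\mathcal{I}^{\mathbb{P}})$, so the set is closed.

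The one point requiring care is the coordinated splitting of the two tolerances: $\varepsilon$ must be divided as $\tfrac{\varepsilon}{2}+\tfrac{\varepsilon}{2}$ so that the approximation term $d(Y_k,Y)$ can be absorbed inside the triangle inequality, while simultaneously $\delta$ is split as $\tfrac{\delta}{2}+\tfrac{\delta}{2}$ so that the probabilistic error $\mathbb{P}(d(Y_k,Y)\geq\tfrac{\varepsilon}{2})$ together with the threshold deficit from $1-\tfrac{\delta}{2}$ recombine to exactly the target level $1-\delta$. I expect no genuine obstacle beyond this bookkeeping; in particular, admissibility of $\mathcal{I}$ plays no role here, as only the downward-closedness of the ideal is used.
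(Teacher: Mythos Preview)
Your proof is correct and follows essentially the same argument as the paper: both fix $\varepsilon,\delta>0$, choose a single $Y_k$ (the paper's $Y_{n_0}$) with $\mathbb{P}(d(Y_k,Y)\geq\tfrac{\varepsilon}{2})<\tfrac{\delta}{2}$, apply the triangle inequality at the event/probability level to obtain the set inclusion $\{n:\mathbb{P}(d(X_n,Y_k)<r+\tfrac{\varepsilon}{2})>1-\tfrac{\delta}{2}\}\subseteq\{n:\mathbb{P}(d(X_n,Y)<r+\varepsilon)>1-\delta\}$, and conclude via downward closedness of $\mathcal{I}$. Your remark that admissibility of $\mathcal{I}$ is never actually invoked is accurate---only the hereditary property is used---and this observation is not made explicit in the paper.
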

\begin{proof}
	
Let $\varepsilon,\delta>0$ be arbitrary.
Assume that $\{Y_n\}_{n\in\mathbb{N}}$ is a sequence in $\Gamma^{r^s}_{\underline{X}}(\mathcal{I}^{\mathbb{P}})$ such that $Y_n\xrightarrow{\rho} Z$.  Since $\rho$ metrizes convergence in probability, there exists a natural $n_0$ such that
$$
\mathbb{P}(d(Y_{n_0},Z)\geq\frac{\varepsilon}{2})<\frac{\delta}{2}.
$$
Since $Y_{n_0}\in\Gamma^{r^s}_{\underline{X}}(\mathcal{I}^{\mathbb{P}})$, we obtain
$$
A(\varepsilon)=\{n\in\mathbb{N}:\mathbb{P}(d(X_n,Y_{n_0})<r+\frac{\varepsilon}{2})>1-\frac{\delta}{2}\}\notin\mathcal{I}
$$
Now pick any $n\in A(\varepsilon)$, then
\begin{align*}
	\mathbb{P}(d(X_n,Z)\geq r+\varepsilon)&\leq  \mathbb{P}(d(X_n,Y_{n_0})\geq r+\frac{\varepsilon}{2})+	\mathbb{P}(d(Y_{n_0},Z)\geq\frac{\varepsilon}{2})\\
	&\leq \mathbb{P}(d(X_n,Y_{n_0})\geq r+\frac{\varepsilon}{2})+\frac{\delta}{2}\\
	\Rightarrow 	\mathbb{P}(d(X_n,Z)<r+\varepsilon)&>\mathbb{P}(d(X_n,Y_{n_0})<r+\frac{\varepsilon}{2})-\frac{\delta}{2}>1-\delta.
\end{align*}
This ensures that  $\{n\in\mathbb{N}:\mathbb{P}(d(X_n,Z)<r+\varepsilon)>1-\delta\}\supseteq A(\varepsilon)$. Since $A(\varepsilon)\notin\mathcal{I}$, we have $\{n\in\mathbb{N}:\mathbb{P}(d(X_n,Z)<r+\varepsilon)>1-\delta\}\notin\mathcal{I}$, i.e., $Z\in\Gamma^{r^s}_{\underline{X}}(\mathcal{I}^{\mathbb{P}})$. Thus we can conclude that $\Gamma^{r^s}_{\underline{X}}(\mathcal{I}^{\mathbb{P}})$ is closed in $(\mathfrak{X^0},\rho)$. 
\end{proof}

In general, for a given sequence $\underline{X}$ in $\mathfrak{X}$, the set $\Gamma^{r^w}_{\underline{X}}(\mathcal{I}^{\mathbb{P}})$ is not closed in $(\mathfrak{X^0},\rho)$. We now present an example to demonstrate this point.
\begin{example}\label{not closed}
	Consider an admissible ideal $\mathcal{I}$ on $\mathbb{N}$ and fix any $A\notin\mathcal{I}$. Let $\{a_n\}_{n\in\mathbb{N}}$ be a sequence  of positive real number such that $a_n>a_{n+1}$ for each $\mathbb{N}$ and $\displaystyle\sum_{n=1}^{\infty}a_n=1$.  Next, consider the sequence $\underline{X}=\{X_n\}_{n\in\mathbb{N}}$, set up as follows:
	\begin{align*}
		X_{n}\in
		\begin{cases}
			\{a_1,a_2,...,a_k,...\}~\mbox{ with }~ \mathbb{P}(X_n=a_k)=a_k~\mbox{ for all }~k\in\mathbb{N}~&\mbox{ if }	~n\in A\\
			\{-n^3,n^2\}~\mbox{ with }~ \mathbb{P}(X_n=-n^3)=\frac{1}{3}~\mbox{ and }~\mathbb{P}(X_n=n^2)=\frac{2}{3}~&\mbox{ if }~n\notin A.\\
		\end{cases}
	\end{align*}
	Let $\{Y_n\}_{n\in\mathbb{N}}$ be a sequence in $\mathfrak{X}(\mathbb{R})$ such that 
	$$
	\mathbb{P}(Y_k=a_k)=1~\mbox{ for each }~k\in\mathbb{N}.
	$$
	We intend to show that $Y_k\in \Gamma^{0^w}_{\underline{X}}(\mathcal{I}^{\mathbb{P}})$ for each $k\in\mathbb{N}$. Let $\varepsilon>0$ be arbitrary. Then, for each $n\in A$, we have
	\begin{align*}
		\mathbb{P}(|X_n-Y_k|<\varepsilon)&=\mathbb{P}(Y_k-\varepsilon<X_n<Y_k+\varepsilon)\\
		&\geq \mathbb{P}(X_n=a_k)=a_k.
	\end{align*}
	This ensures that $A\subseteq \{n\in\mathbb{N}:\mathbb{P}(|X_n-Y_k|<\varepsilon)\geq \delta_{*}(Y_k)\}$, where $\delta_{*}(Y_k)=a_k$. 
	Now, it is easy to realize that $Y_n\xrightarrow{\rho} Z$, where $\mathbb{P}(Z=0)=1$.\\
	
To prove $\Gamma^{0^w}_{\underline{X}}(\mathcal{I}^{\mathbb{P}})$ is not closed, it is enough to show that $Z\notin \Gamma^{0^w}_{\underline{X}}(\mathcal{I}^{\mathbb{P}})$.
	Since $\displaystyle\lim_{n\to\infty}a_n=0$, there exists $n_{\varepsilon}\in\mathbb{N}$ such that for all $n\geq n_{\varepsilon}$, we have  $0<a_n<\varepsilon$.
	Therefore, 
	\begin{align*}
		\mathbb{P}(|X_n-Z|<\varepsilon)&=\sum_{n=n_{\varepsilon}}^{\infty}a_n\to 0~\mbox{ as }~\varepsilon\to 0.
	\end{align*}
 Hence we can conclude that $\Gamma^{0^w}_{\underline{X}}(\mathcal{I}^{\mathbb{P}})$ is not closed in $(\mathfrak{X^0}(\mathbb{R}),\rho)$.
\end{example}
Observe that, in Example \ref {not closed}, we have $\inf\{\delta_{*}(Y):Y\in\Gamma^{0^w}_{\underline{X}}(\mathcal{I}^{\mathbb{P}})\}=0$. We now impose a restriction on $\Gamma^{r^w}_{\underline{X}}(\mathcal{I}^{\mathbb{P}})$ to ensure its closedness.
\begin{proposition}\label{weakclosed}
If $\underline{X}$ in $\mathfrak{X}$ is such that $\inf\{\delta_{*}(Y):Y\in \Gamma^{r^w}_{\underline{X}}(\mathcal{I}^{\mathbb{P}})\}>0$ then $\Gamma^{r^w}_{\underline{X}}(\mathcal{I}^{\mathbb{P}})$ is a closed set in $(\mathfrak{X^0},\rho)$.
\end{proposition}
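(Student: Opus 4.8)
The plan is to follow the template of Proposition \ref{strong closed}: take a sequence $\{Y_m\}_{m\in\mathbb{N}}$ in $\Gamma^{r^w}_{\underline{X}}(\mathcal{I}^{\mathbb{P}})$ with $Y_m\xrightarrow{\rho}Z$ for some $Z\in\mathfrak{X}^0$, and show $Z\in\Gamma^{r^w}_{\underline{X}}(\mathcal{I}^{\mathbb{P}})$. The decisive new ingredient, absent in the strong case, is that a weak cluster point carries an \emph{$\varepsilon$-independent} constant $\delta_*$; so the entire difficulty is to produce one fixed $\delta_*(Z)>0$ valid for all $\varepsilon>0$. First I would set $\delta_0:=\inf\{\delta_*(Y):Y\in\Gamma^{r^w}_{\underline{X}}(\mathcal{I}^{\mathbb{P}})\}$, which is positive by hypothesis. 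Since $\delta_*(Y_m)\geq\delta_0$ for every $m$, and $\{n\in\mathbb{N}:\mathbb{P}(d(X_n,Y_m)<r+\varepsilon)>\delta_0\}\supseteq\{n\in\mathbb{N}:\mathbb{P}(d(X_n,Y_m)<r+\varepsilon)>\delta_*(Y_m)\}$, the downward closure of $\mathcal{I}$ upgrades the defining non-membership to the uniform statement that $\{n\in\mathbb{N}:\mathbb{P}(d(X_n,Y_m)<r+\varepsilon)>\delta_0\}\notin\mathcal{I}$ for every $m\in\mathbb{N}$ and every $\varepsilon>0$. This step replaces all the individual cluster constants by the common floor $\delta_0$.

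Next, fixing an arbitrary $\varepsilon>0$, I would use that $\rho$ metrizes convergence in probability to select an index $m_0$ (allowed to depend on $\varepsilon$) with $\mathbb{P}(d(Y_{m_0},Z)\geq\tfrac{\varepsilon}{2})<\tfrac{\delta_0}{2}$. The triangle inequality for $d$ gives the event inclusion $\{d(X_n,Y_{m_0})<r+\tfrac{\varepsilon}{2}\}\cap\{d(Y_{m_0},Z)<\tfrac{\varepsilon}{2}\}\subseteq\{d(X_n,Z)<r+\varepsilon\}$, whence for every $n$,
\begin{equation*}
\mathbb{P}(d(X_n,Z)<r+\varepsilon)\geq\mathbb{P}\!\left(d(X_n,Y_{m_0})<r+\tfrac{\varepsilon}{2}\right)-\mathbb{P}\!\left(d(Y_{m_0},Z)\geq\tfrac{\varepsilon}{2}\right).
\end{equation*}
Restricting to $n$ in the non-ideal set $A(\varepsilon):=\{n\in\mathbb{N}:\mathbb{P}(d(X_n,Y_{m_0})<r+\tfrac{\varepsilon}{2})>\delta_0\}$, the right-hand side exceeds $\delta_0-\tfrac{\delta_0}{2}=\tfrac{\delta_0}{2}$. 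Consequently $\{n\in\mathbb{N}:\mathbb{P}(d(X_n,Z)<r+\varepsilon)>\tfrac{\delta_0}{2}\}\supseteq A(\varepsilon)\notin\mathcal{I}$, and so this set is itself outside $\mathcal{I}$.

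Since $\varepsilon>0$ was arbitrary while the surviving bound $\tfrac{\delta_0}{2}$ does not depend on $\varepsilon$, taking $\delta_*(Z)=\tfrac{\delta_0}{2}>0$ verifies the definition and gives $Z\in\Gamma^{r^w}_{\underline{X}}(\mathcal{I}^{\mathbb{P}})$, proving closedness. I expect the main obstacle to be exactly this uniformity requirement: $m_0$ must be permitted to depend on $\varepsilon$, yet the retained probability level must not, and reconciling these is what forces a positive, $\varepsilon$-free lower bound. This is precisely where the hypothesis $\delta_0>0$ is indispensable—without it the constants $\delta_*(Y_m)$ could decay to $0$ along the sequence (as witnessed by Example \ref{not closed}), leaving no positive floor to transfer to the limit $Z$.
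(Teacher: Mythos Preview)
Your proof is correct and follows essentially the same route as the paper's: take a $\rho$-convergent sequence in the weak cluster set, replace the individual constants $\delta_*(Y_m)$ by a common positive floor, and use the triangle-inequality estimate together with convergence in probability to transfer the non-ideal condition to the limit $Z$ with the fixed constant $\tfrac{\delta_0}{2}$. The only cosmetic difference is that the paper bounds $\mathbb{P}(d(Y_{n_0},Z)\geq\tfrac{\varepsilon}{2})$ by $\tfrac{\varepsilon}{2}$ (forcing a restriction $\varepsilon<\tfrac{\ell}{2}$ and an implicit appeal to monotonicity for larger $\varepsilon$), whereas your choice to bound it by $\tfrac{\delta_0}{2}$ works uniformly for all $\varepsilon>0$ and is marginally cleaner.
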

\begin{proof}
Assume that $\{Y_n\}_{n\in\mathbb{N}}$ is a sequence in $\Gamma^{r^w}_{\underline{X}}(\mathcal{I}^{\mathbb{P}})$ such that $Y_n\xrightarrow{\rho} Z$. Then, by the given hypothesis, we have $\ell=\inf\{\delta_{*}(Y_n):n\in\mathbb{N}\}>0$. Let $0<\varepsilon<\frac{\ell}{2}$. Since $\rho$ metrizes convergence in probability, there exists $n_0\in\mathbb{N}$ such that
$$
\mathbb{P}(d(Y_{n_0},Z)\geq\frac{\varepsilon}{2})<\frac{\varepsilon}{2}.
$$
Since $Y_{n_0}\in\Gamma^{r^w}_{\underline{X}}(\mathcal{I}^{\mathbb{P}})$, we have
$
\{n\in\mathbb{N}:\mathbb{P}(d(X_n,Y_{n_0})<r+\frac{\varepsilon}{2})>\delta_{*}(Y_{n_0})\}\notin\mathcal{I}$. Therefore, we can write
$$
A(\varepsilon)=\{n\in\mathbb{N}:\mathbb{P}(d(X_n,Y_{n_0})<r+\frac{\varepsilon}{2})>\ell\}\notin\mathcal{I}.
$$
Pick arbitrary $n\in A(\varepsilon)$. Then, observe that
\begin{align*}
	\mathbb{P}(d(X_n,Z)\geq r+\varepsilon)&\leq  \mathbb{P}(d(X_n,Y_{n_0})\geq r+\frac{\varepsilon}{2})+	\mathbb{P}(d(Y_{n_0},Z)\geq \frac{\varepsilon}{2})\\
	&\leq \mathbb{P}(d(X_n,Y_{n_0})\geq r+\frac{\varepsilon}{2})+\frac{\varepsilon}{2}\\
	\Rightarrow 	\mathbb{P}(d(X_n,Z)<r+\varepsilon)&>\mathbb{P}(d(X_n,Y_{n_0})<r+\frac{\varepsilon}{2})-\frac{\varepsilon}{2}>\ell-\frac{\varepsilon}{2}>\frac{\ell}{2}.
\end{align*}
This entails that  $\{n\in\mathbb{N}:\mathbb{P}(d(X_n,Z)<r+\varepsilon)>\frac{\ell}{2}\}\supseteq A(\varepsilon)$. Since $A(\varepsilon)\notin\mathcal{I}$, we deduce that $\{n\in\mathbb{N}:\mathbb{P}(d(X_n,Z)<r+\varepsilon)>\frac{\ell}{2}\}\notin\mathcal{I}$, i.e., $Z\in\Gamma^{r^w}_{\underline{X}}(\mathcal{I}^{\mathbb{P}})$. Hence $\Gamma^{r^w}_{\underline{X}}(\mathcal{I}^{\mathbb{P}})$ is closed in $(\mathfrak{X^0},\rho)$.
\end{proof}
	Recall that an ideal $\mathcal{I}$ on $\mathbb{N}$ is termed maximal if, for any ideal $\mathcal{J}$ on $\mathbb{N}$ satisfying $\mathcal{I} \subseteq \mathcal{J} \subseteq \mathbb{N}$, we have either \( \mathcal{J} = \mathcal{I} \) or $ \mathcal{J} = \mathbb{N}$. The following fact regarding maximal ideals will be utilized in this paper.
	\begin{lemma}\cite[Lemma 5.1]{kostyrko2000convergence}\label{lemma maximal}
		Let $\mathcal{I}$ be an maximal admissible ideal on $\mathbb{N}$. Then for each $A\subset\mathbb{N}$, we have either $A\in\mathcal{I}$ or $\mathbb{N}\setminus A\in\mathcal{I}$.
	\end{lemma}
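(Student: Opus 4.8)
The plan is to argue by contraposition, exploiting maximality of $\mathcal{I}$ through the ideal obtained by adjoining $A$. Fix $A \subseteq \mathbb{N}$ and suppose $A \notin \mathcal{I}$; I would then derive $\mathbb{N} \setminus A \in \mathcal{I}$. The natural object to introduce is the family
$$\mathcal{J} = \{E \subseteq \mathbb{N} : E \subseteq A \cup B \text{ for some } B \in \mathcal{I}\},$$
namely the smallest ideal containing both $\mathcal{I}$ and $A$.

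First I would verify that $\mathcal{J}$ is indeed an ideal containing $\mathcal{I}$ and the set $A$. Emptiness is clear since $\varnothing \in \mathcal{I}$; closure under finite unions holds because if $E_1 \subseteq A \cup B_1$ and $E_2 \subseteq A \cup B_2$ with $B_1, B_2 \in \mathcal{I}$, then $E_1 \cup E_2 \subseteq A \cup (B_1 \cup B_2)$ with $B_1 \cup B_2 \in \mathcal{I}$; and downward closure is immediate from the definition. Every $B \in \mathcal{I}$ lies in $\mathcal{J}$ (take $E = B$), so $\mathcal{I} \subseteq \mathcal{J}$, and $A \in \mathcal{J}$ (take $B = \varnothing$).

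Next I would invoke maximality. Since $A \in \mathcal{J} \setminus \mathcal{I}$, the inclusion $\mathcal{I} \subseteq \mathcal{J}$ is strict, so $\mathcal{J} \neq \mathcal{I}$. Maximality of $\mathcal{I}$ then forces $\mathcal{J} = \mathcal{P}(\mathbb{N})$ (the improper ideal denoted $\mathbb{N}$ in the maximality condition), and in particular $\mathbb{N} \in \mathcal{J}$. By the definition of $\mathcal{J}$ there is $B \in \mathcal{I}$ with $\mathbb{N} \subseteq A \cup B$, whence $\mathbb{N} \setminus A \subseteq B$; since $B \in \mathcal{I}$ and $\mathcal{I}$ is downward closed, $\mathbb{N} \setminus A \in \mathcal{I}$, as required.

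There is essentially no serious obstacle here; the only point demanding care is confirming that $\mathcal{J}$ is genuinely an ideal and that adjoining $A$ strictly enlarges $\mathcal{I}$, so that maximality legitimately yields $\mathcal{J} = \mathcal{P}(\mathbb{N})$ rather than $\mathcal{J} = \mathcal{I}$. Since the statement coincides with \cite[Lemma 5.1]{kostyrko2000convergence}, one may alternatively simply cite it; the argument above is the standard self-contained justification.
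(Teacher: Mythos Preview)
Your proposal is correct and is the standard argument for this elementary fact. The paper itself does not supply a proof: it simply cites \cite[Lemma 5.1]{kostyrko2000convergence}, so your self-contained justification (constructing the ideal generated by $\mathcal{I}$ together with $A$ and invoking maximality) goes beyond what the paper does, while remaining entirely in line with how the result is usually established.
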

We are now in a position to present a pivotal result, namely Theorem \ref{maximal th}, which provides a characterization of maximal admissible ideals in terms of $\mathcal{I}^{\mathbb{P}}$-$LIM^rX_i$ and $\Gamma^{r^s}_{\underline{X}}(\mathcal{I}^{\mathbb{P}})$.
\begin{theorem}\label{maximal th}
Suppose $(\mathfrak{X}^0, \rho)$ has at least two distinct elements, and $\mathcal{I}$ is an admissible ideal. Then $\mathcal{I}$ is maximal if and only if $\mathcal{I}^{\mathbb{P}}$-$LIM^rX_i=\Gamma^{r^s}_{\underline{X}}(\mathcal{I}^{\mathbb{P}})$ holds for each $r\geq 0$ and each $\underline{X}$ in $\mathfrak{X}$.
\end{theorem}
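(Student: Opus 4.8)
The plan is to separate the one inclusion that holds for every admissible ideal from the one that genuinely requires maximality, and then to break that second inclusion by a splitting-set construction in the non-maximal case. First I would record that $\mathcal{I}^{\mathbb{P}}$-$LIM^rX_i\subseteq \Gamma^{r^s}_{\underline{X}}(\mathcal{I}^{\mathbb{P}})$ for any admissible $\mathcal{I}$ and any $r\geq 0$. Indeed, if $X_*\in \mathcal{I}^{\mathbb{P}}$-$LIM^rX_i$ and $\varepsilon,\delta>0$, then $\{n:\mathbb{P}(d(X_n,X_*)>r+\varepsilon/2)>\delta/2\}\in\mathcal{I}$, so its complement lies in the dual filter $\mathcal{F}(\mathcal{I})$ and is therefore not in $\mathcal{I}$. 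For each $n$ in that complement one has $\mathbb{P}(d(X_n,X_*)\geq r+\varepsilon)\leq\delta/2$, hence $\mathbb{P}(d(X_n,X_*)<r+\varepsilon)>1-\delta$; since a superset of a non-$\mathcal{I}$ set is again non-$\mathcal{I}$, the witnessing set for the strong cluster condition is not in $\mathcal{I}$, giving $X_*\in\Gamma^{r^s}_{\underline{X}}(\mathcal{I}^{\mathbb{P}})$. No maximality is used here.

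For the forward implication, assuming $\mathcal{I}$ maximal, it remains to prove $\Gamma^{r^s}_{\underline{X}}(\mathcal{I}^{\mathbb{P}})\subseteq \mathcal{I}^{\mathbb{P}}$-$LIM^rX_i$. I would fix $Y\in\Gamma^{r^s}_{\underline{X}}(\mathcal{I}^{\mathbb{P}})$ and $\varepsilon,\delta>0$, set $A=\{n:\mathbb{P}(d(X_n,Y)>r+\varepsilon)>\delta\}$, and suppose toward a contradiction that $A\notin\mathcal{I}$. By Lemma \ref{lemma maximal} this forces $\mathbb{N}\setminus A\in\mathcal{I}$. The key elementary inequality is that $\mathbb{P}(d(X_n,Y)<r+\varepsilon)>1-\delta$ implies $\mathbb{P}(d(X_n,Y)>r+\varepsilon)<\delta$, so every $n$ satisfying the strong-cluster event lies in $\mathbb{N}\setminus A$. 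Hence $\{n:\mathbb{P}(d(X_n,Y)<r+\varepsilon)>1-\delta\}\subseteq\mathbb{N}\setminus A\in\mathcal{I}$, which contradicts $Y\in\Gamma^{r^s}_{\underline{X}}(\mathcal{I}^{\mathbb{P}})$. Thus $A\in\mathcal{I}$ for all $\varepsilon,\delta>0$, i.e. $Y\in\mathcal{I}^{\mathbb{P}}$-$LIM^rX_i$, and combined with the first paragraph we obtain equality.

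For the converse I would argue by contraposition: if $\mathcal{I}$ is not maximal, then the dichotomy of Lemma \ref{lemma maximal} fails, so there is a splitting set $A$ with $A\notin\mathcal{I}$ and $\mathbb{N}\setminus A\notin\mathcal{I}$. The hypothesis that $(\mathfrak{X}^0,\rho)$ has two distinct elements yields $X\neq Y$ in $\mathfrak{X}^0$, hence $\mathbb{P}(d(X,Y)>0)>0$, which produces two points $u\neq v$ in $U$ with $c:=d(u,v)>0$. I would then take the degenerate sequence $\underline{X}$ with $\mathbb{P}(X_n=u)=1$ for $n\in A$ and $\mathbb{P}(X_n=v)=1$ for $n\notin A$, and let $U_*\in\mathfrak{X}$ be degenerate at $u$, working with $r=0$. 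For $0<\varepsilon<c$ the set $\{n:\mathbb{P}(d(X_n,U_*)<\varepsilon)>1-\delta\}$ equals $A\notin\mathcal{I}$, so $U_*\in\Gamma^{0^s}_{\underline{X}}(\mathcal{I}^{\mathbb{P}})$, whereas $\{n:\mathbb{P}(d(X_n,U_*)>\varepsilon)>\delta\}$ equals $\mathbb{N}\setminus A\notin\mathcal{I}$, so $U_*\notin\mathcal{I}^{\mathbb{P}}$-$LIM^0X_i$. This exhibits an $r$ and an $\underline{X}$ for which the two sets differ, so equality for all $r\geq 0$ and all $\underline{X}$ indeed forces maximality.

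The main obstacle is the maximality-dependent inclusion $\Gamma^{r^s}_{\underline{X}}(\mathcal{I}^{\mathbb{P}})\subseteq\mathcal{I}^{\mathbb{P}}$-$LIM^rX_i$, whose delicate point is reconciling the ``cluster'' formulation (events $\{d<r+\varepsilon\}$ with probability $>1-\delta$) with the ``limit'' formulation (events $\{d>r+\varepsilon\}$ with probability $>\delta$). I expect this to be handled cleanly by the complementary bound $\mathbb{P}(d<r+\varepsilon)>1-\delta\Rightarrow\mathbb{P}(d>r+\varepsilon)<\delta$, which lets the same pair $\varepsilon,\delta$ serve both formulations, together with the set-dichotomy of Lemma \ref{lemma maximal}; everything else reduces to the monotonicity and finite-union properties of the ideal.
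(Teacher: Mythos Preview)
Your argument is correct and follows the same overall architecture as the paper: the inclusion $\mathcal{I}^{\mathbb{P}}\text{-}LIM^rX_i\subseteq\Gamma^{r^s}_{\underline{X}}(\mathcal{I}^{\mathbb{P}})$ from non-triviality alone, the reverse inclusion from the dichotomy of Lemma~\ref{lemma maximal}, and a splitting-set counterexample for the converse. The only noteworthy difference is in that counterexample. The paper works directly with the two given elements $Y,Z\in\mathfrak{X}^0$, sets $X_n=Y$ on $A$ and $X_n=Z$ off $A$, chooses a roughness level $r=\beta\in(0,\rho(Y,Z))$, and invokes the fact that the infimum defining the Ky~Fan metric is attained to obtain $\mathbb{P}(d(Y,Z)>\beta+\varepsilon)>\beta+\varepsilon$ for $\beta+\varepsilon<\rho(Y,Z)$. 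You instead extract concrete points $u\neq v$ in $U$ from the event $\{d(X,Y)>0\}$, use degenerate random variables at $u$ and $v$, and take $r=0$. Your route is a touch more elementary---it avoids any appeal to the attained-infimum property and makes the probability computations trivial---at the cost of the small extra step of passing from $\mathfrak{X}^0$ down to $U$; the paper's route stays entirely in $\mathfrak{X}^0$ and shows the failure can be placed at any positive roughness level below $\rho(Y,Z)$. Both constructions are valid and equally short.
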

\begin{proof}
	Assume that $\mathcal{I}$ is a maximal admissible ideal on $\mathbb{N}$. Let $\varepsilon,\delta>0$ be arbitrary.
Pick any $r\geq 0$ and any sequence $\underline{X}$ with values in $\mathfrak{X}$. Then, for each $X_*\in\mathcal{I}^{\mathbb{P}}$-$LIM^rX_i$, we have 
\begin{align*}
	&\{n\in\mathbb{N}:\mathbb{P}(d(X_n,X_*)\geq r+\varepsilon)>\delta\}\in\mathcal{I}\\
	\Rightarrow~& \{n\in\mathbb{N}:\mathbb{P}(d(X_n,X_*)\geq r+\varepsilon)\leq\delta\}\notin\mathcal{I}~\mbox{ (since $\mathcal{I}$ is non-trivial)}\\
	\Rightarrow~& \{n\in\mathbb{N}:1-\mathbb{P}(d(X_n,X_*)< r+\varepsilon)\leq\delta\}\notin\mathcal{I}\\
\Rightarrow~& \{n\in\mathbb{N}:\mathbb{P}(d(X_n,X_*)< r+\varepsilon)\geq 1-\delta\}\notin\mathcal{I}.
\end{align*}
This ensures that $X_*\in \Gamma^{r^s}_{\underline{X}}(\mathcal{I}^{\mathbb{P}})$, i.e., $\mathcal{I}^{\mathbb{P}}$-$LIM^rX_i\subseteq\Gamma^{r^s}_{\underline{X}}(\mathcal{I}^{\mathbb{P}})$.\\

On the other hand, pick any $Y\in\Gamma^{r^s}_{\underline{X}}(\mathcal{I}^{\mathbb{P}})$. Then observe that 
\begin{align*}
	&\{n\in\mathbb{N}:\mathbb{P}(d(X_n,X_*)< r+\varepsilon)\geq 1-\delta\}\notin\mathcal{I}\\
	\Rightarrow~& \{n\in\mathbb{N}:\mathbb{P}(d(X_n,X_*)< r+\varepsilon)<1-\delta\}\in\mathcal{I}~\mbox{ (since $\mathcal{I}$ is admissible and maximal)}\\
	\Rightarrow~& \{n\in\mathbb{N}:1-\mathbb{P}(d(X_n,X_*)\geq r+\varepsilon)<1-\delta\}\in\mathcal{I}\\
	\Rightarrow~& \{n\in\mathbb{N}:\mathbb{P}(d(X_n,X_*)\geq r+\varepsilon)>\delta\}\in\mathcal{I}.
\end{align*}
Consequently, we obtain $Y\in\mathcal{I}^{\mathbb{P}}$-$LIM^rX_i$, i.e., $\Gamma^{r^s}_{\underline{X}}(\mathcal{I}^{\mathbb{P}})\subseteq \mathcal{I}^{\mathbb{P}}$-$LIM^rX_i$. Hence we can deduce that $\mathcal{I}^{\mathbb{P}}$-$LIM^rX_i=\Gamma^{r^s}_{\underline{X}}(\mathcal{I}^{\mathbb{P}})$.\\

	Conversely, suppose that $\mathcal{I}^{\mathbb{P}}$-$LIM^rX_i=\Gamma^{r^s}_{\underline{X}}(\mathcal{I}^{\mathbb{P}})$ holds for each $r\geq 0$ and each $\underline{X}$ in $\mathfrak{X}$.
Assume, on the contrary, that $\mathcal{I}$ is not maximal. Then there exists $A\subset\mathbb{N}$ such that either $A,\mathbb{N}\setminus A\in\mathcal{I}$ or $A,\mathbb{N}\setminus A\notin\mathcal{I}$. Since $\mathcal{I}$ is non-trivial, we must have $A,\mathbb{N}\setminus A\notin\mathcal{I}$. By the given hypothesis, there exist $Y,Z\in \mathfrak{X}^0$ such that $\alpha=\rho(Y,Z)>0$. Since the infimum in the definition of $\rho(Y,Z)=\inf \{\varepsilon>0:\mathbb{P}(d(Y,Z)>\varepsilon)\leq\varepsilon\}$ is attained, we have 
$$
\mathbb{P}(d(Y,Z)>\alpha)\leq\alpha.
$$
We fix any $0<\beta<\alpha$. 
Now, pick any $\varepsilon>0$ such that $\beta+\varepsilon<\alpha$.
Then, we will have 
$$
\mathbb{P}(d(Y,Z)>\beta+\varepsilon)>\beta+\varepsilon~\mbox{ (using the minimality of $\alpha$)}.
$$
Let us define $\underline{X}$ in $\mathfrak{X}$ as follows:
\begin{align*}
	X_n=
	\begin{cases}
		Y~&\mbox{ if }~ n\in A,\\
		Z ~&\mbox{ if }~n\in \mathbb{N}\setminus A.
	\end{cases}
\end{align*}
Then, observe that 
$$
A\subseteq\{n\in\mathbb{N}:\mathbb{P}(d(X_n,Y)<\beta+\varepsilon)>1-\delta\}\notin\mathcal{I}~\mbox{ for every }~\delta>0,
$$
i.e., $Y\in\Gamma^{\beta^s}_{\underline{X}}(\mathcal{I}^{\mathbb{P}})$. Also, observe that 
$$
\mathbb{N}\setminus A\subseteq\{n\in\mathbb{N}:\mathbb{P}(d(X_n,Y)>\beta+\varepsilon)>\beta+\varepsilon\}\notin\mathcal{I},
$$
i.e., $Y\notin \mathcal{I}^{\mathbb{P}}$-$LIM^{\beta}X_i$ - which is a contradiction. Hence we deduce that $\mathcal{I}$ is maximal.
\end{proof}


\end{document}